\newcommand{\FF}{\mathbb{F}}
\newcommand{\ZZ}{\mathbb{Z}}
\newcommand{\bC}{\mathbf{C}}
\newcommand{\bG}{\mathbf{G}}
\newcommand{\bL}{\mathbf{L}}
\newcommand{\bc}{\mathbf{c}}
\newcommand{\bm}{\mathbf{m}}
\newcommand{\cB}{\mathcal{B}}
\newcommand{\cE}{\mathcal{E}}
\newcommand{\cF}{\mathcal{F}}
\newcommand{\cU}{\mathcal{U}}
\newcommand{\cW}{\mathcal{W}}
\newcommand{\fS}{\mathfrak{S}}
\newcommand{\scC}{\mathscr{C}}
\newcommand{\scT}{\mathscr{T}}
\newcommand{\scU}{\mathscr{U}}
\newcommand{\Aut}{\operatorname{Aut}\nolimits}
\newcommand{\IBr}{\operatorname{IBr}\nolimits}
\newcommand{\Ind}{\operatorname{Ind}}
\newcommand{\Lin}{\operatorname{Lin}}
\newcommand{\Irr}{\operatorname{Irr}\nolimits}
\newcommand{\Out}{\operatorname{Out}\nolimits}
\newcommand{\Res}{\operatorname{Res}}
\newcommand{\GL}{\operatorname{GL}}
\newcommand{\GU}{\operatorname{GU}}
\newcommand{\Sp}{\operatorname{Sp}}
\newcommand{\GO}{\operatorname{GO}}
\newcommand{\SO}{\operatorname{SO}}
\newcommand{\CSp}{\operatorname{CSp}}
\newcommand{\PSp}{\operatorname{PSp}}
\newcommand{\J}{\operatorname{J}}
\newcommand{\tpsi}{\tilde{\psi}}
\newcommand{\ts}{\tilde{s}}
\newcommand{\wB}{\widetilde{B}}
\newcommand{\tG}{\widetilde{G}}
\newcommand{\tX}{\widetilde{X}}
\newcommand{\wR}{\widetilde{R}}
\newcommand{\wcB}{\widetilde{\cB}}
\newcommand{\tbG}{\widetilde{\bG}}
\newcommand{\wchi}{\widetilde{\chi}}
\newcommand{\trho}{\tilde{\rho}}
\newcommand{\wOm}{\widetilde{\Omega}}
\newcommand{\tw}[1]{{}^{#1}\!}
\let\eps=\epsilon
\let\ga=\gamma
\let\veps=\varepsilon
\let\la=\lambda
\let\vhi=\varphi
\let\Ga=\Gamma
\let\ka=\kappa
\theoremstyle{theorem}
\newtheorem{mainthm}{Theorem}
\newtheorem{thm}{Theorem}[section]
\newtheorem{lem}[thm]{Lemma}
\newtheorem{prop}[thm]{Proposition}
\newtheorem{cor}[thm]{Corollary}
\theoremstyle{definition}
\newtheorem{rmk}[thm]{Remark}
\newtheorem*{rem}{Remark}
\numberwithin{equation}{section}
\begin{document}

\title[Alperin weight condition for type $\mathsf C$]{The inductive blockwise Alperin weight condition\\ for type $\mathsf C$ and the prime $2$}

\author{Zhicheng Feng}
\address{School of Mathematics and Physics, University of Science and Technology Beijing, Beijing 100083, China}
\email{zfeng@pku.edu.cn}

\author{Gunter Malle}
\address{FB Mathematik, TU Kaiserslautern, Postfach 3049, 67653 Kaiserslautern,
  Germany}
\email{malle@mathematik.uni-kl.de}

\thanks{The first author gratefully acknowledges financial support by NSFC~(11631001 and 11901028) and Fundamental Research Funds for the Central Universities (No. FRF-TP-19-036A1). The second author gratefully acknowledges financial support by SFB TRR 195.}

\begin{abstract}
We establish the inductive blockwise Alperin weight condition for simple groups
of Lie type $\mathsf C$ and the bad prime $2$.  As a main step, we derive a
labelling set for the irreducible $2$-Brauer characters of the finite
symplectic groups $\Sp_{2n}(q)$ (with odd $q$), together with the action of
automorphisms. As a further important ingredient we prove a Jordan
decomposition for weights.
\end{abstract}

\keywords{Alperin weight conjecture, symplectic groups, action of automorphisms
on Brauer characters, unipotent classes}

\subjclass[2010]{20C20, 20C33}

\date{\today}

\maketitle


\section{Introduction}

Let $G$ be a finite group and $\ell$ a prime. An \emph{$\ell$-weight of $G$} is
a pair $(R,\vhi)$, where $R$ is an $\ell$-subgroup of $G$ and
$\vhi\in\Irr(N_G(R)/R)$ is of defect zero. We also view $\vhi$ as a
character of $N_G(R)$. The group $G$ acts by conjugation on the set of weights.
Let $B$ be an $\ell$-block of $G$. A weight $(R,\vhi)$ of $G$ is a
\emph{$B$-weight} if $b^G=B$, where $b$ is the block of $N_G(R)$ containing
$\vhi$ and $b^G$ is the induced block. We denote by $\cW(B)$ the set of
$G$-conjugacy classes of $B$-weights.
In 1986, Jonathan Alperin \cite{Al87} stated his famous conjecture:
$$|{\IBr(B)}|=|\cW(B)|\qquad\text{for every block $B$ of every finite
  group $G$}.$$

In 2011, Navarro--Tiep \cite{NT11} proved a reduction theorem for the
blockfree version of Alperin's weight conjecture, and soon after, Sp\"ath
\cite{Sp13} reduced the above blockwise version to the \emph{inductive
blockwise Alperin weight condition} for simple groups (see \S~\ref{sec:iBAW for
sym}). If that condition is satisfied for all finite simple groups, then the
Alperin weight conjecture holds for all blocks of all finite groups. So far
this inductive condition has been verified for several families of simple
groups: groups of Lie type in their defining characteristic \cite{Sp13},
alternating groups, Suzuki and Ree groups \cite{Ma14}, groups of types $G_2$
and $\tw3D_4$ \cite{Sch16}, and certain cases for classical groups
\cite{Feng19,FLZ19a,Li19,LZ18}.

The most difficult case for groups of Lie type $G$ seems to be when $\ell$ is a
bad prime for $G$; so far the results for classical groups are restricted to
good primes only. In this paper, we consider simple groups of Lie type
$\mathsf C$ and the (only) bad prime $\ell=2$.
Our main result is the following.

\begin{mainthm}\label{mainthm}
 The inductive blockwise Alperin weight condition holds for the finite simple
 groups $\PSp_{2n}(q)$, $q$ odd, and the prime $2$.
\end{mainthm}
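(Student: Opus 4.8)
The plan is to verify the inductive blockwise Alperin weight (iBAW) condition from \cite{Sp13} for $S=\PSp_{2n}(q)$ with $q$ odd and $\ell=2$, working throughout with the universal cover, which up to the (well-understood) exceptional multipliers is $\Sp_{2n}(q)$ with automizer $\Aut(S)$ generated by diagonal, field, and (for the covering) graph-type automorphisms. Concretely, one must produce an $\Aut(S)_B$-equivariant bijection $\cW(B)\to\IBr(B)$ for each $2$-block $B$, compatible with the block-theoretic data (Clifford theory over $\Sp_{2n}(q)/Z$, stabilizers, and the cohomological obstruction classes), for which the established strategy is: (1) pin down $\IBr(\Sp_{2n}(q))$ together with the $\Aut$-action; (2) pin down $\cW$ together with the $\Aut$-action via a Jordan decomposition of weights; (3) match the two and descend to $S$. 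Steps (1) and (2) are precisely the ``main step'' and ``further ingredient'' advertised in the abstract, so I would invoke them as the technical core once proven.

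For step (1), since $\ell=2$ is the unique bad prime for type $\mathsf C$, the unipotent-support/Jordan-decomposition machinery of Bonnaf\'e--Rouquier applied to $\tbG=\CSp_{2n}$ reduces the $2$-modular irreducibles of $\Sp_{2n}(q)$ to those in unipotent $2$-blocks of (products of) smaller symplectic and general linear/unitary groups; combining this with the known parametrization of $\IBr$ in unipotent blocks (via $2$-decomposition matrices and the work on basic sets for classical groups in non-defining characteristic) yields an explicit labelling set. The action of $\Aut(S)$ on this set is then computed by tracking field automorphisms on Frobenius eigenvalues and the diagonal/graph automorphisms through the Jordan decomposition, using that $\tG=\CSp_{2n}(q)$ covers the diagonal automorphisms and acts on $\IBr(G)$ with known fibers. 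For step (2), I would set up a ``Jordan decomposition for weights'': given a $2$-weight $(R,\vhi)$ of $\Sp_{2n}(q)$, the radical $2$-subgroup $R$ and the structure of $N_G(R)$ (Alperin--Fong/An-type analysis of radical subgroups of symplectic groups) let one attach a semisimple label and a system of ``unipotent weights'' in smaller groups, and one checks this is a bijection onto $\cW$ compatible with $\Aut(S)$ and with blocks — again the field/diagonal/graph automorphisms act through the smaller-group data.

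With (1) and (2) in hand, step (3) is the final assembly: for each $2$-block $B$ of $S$ one transports the bijection through the Jordan decomposition to a union of unipotent blocks, where one uses a known unipotent iBAW statement (or a direct matching of unipotent $\IBr$ with unipotent weights, which for symplectic groups and $\ell=2$ is comparatively rigid because the principal block dominates), checks $\Aut(S)_B$-equivariance by comparing the two $\Aut$-actions computed in (1) and (2), and verifies the extra conditions of \cite{Sp13}*{Definition~7.2}: that the bijection is compatible with central characters, with restriction to $[G,G]$/passage to $S$, and that the attached projective representations have matching factor sets over $G\rtimes\Aut(S)_B$. The reductions to the simple group require handling the exceptional Schur multipliers of $\Sp_{2n}(q)$ in small cases and the center $Z$ of order $2$; these are finitely many explicit checks.

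The hard part, as flagged by the authors, is genuinely steps (1) and (2) and their \emph{compatibility}: getting a clean labelling of $\IBr(\Sp_{2n}(q))$ for the bad prime $2$ is already delicate (the basic-set and decomposition-matrix input is less uniform than in the good-prime case, and $2$ being bad means the Jordan decomposition of characters interacts nontrivially with blocks), and independently establishing the Jordan decomposition for weights requires controlling radical $2$-subgroups and their normalizers in $\Sp_{2n}(q)$ precisely enough to read off the $\Aut$-action. Reconciling these two parametrizations — so that the bijection in step (3) is visibly $\Aut(S)_B$-equivariant rather than merely a counting bijection — is where the real work lies, and where I would expect most of the paper's length to be spent; by contrast, once equivariance is in place, verifying the cohomological/Clifford-theoretic conditions of \cite{Sp13} for $\PSp_{2n}(q)$ should follow the now-standard template (as in \cite{Feng19,FLZ19a,Li19,LZ18}) with only the $2$-specific and small-rank exceptions needing individual attention.
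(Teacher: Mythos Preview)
Your outline matches the paper's strategy closely: Jordan decomposition for Brauer characters via Bonnaf\'e--Rouquier (Proposition~\ref{Jor-dec-Bar}), Jordan decomposition for weights built on An's classification (Proposition~\ref{Jor-dec-weights}), and a final matching at the unipotent block. Three points of divergence are worth noting. First, there are no graph automorphisms in type $\mathsf C$; $\Out(\Sp_{2n}(q))$ is generated by one diagonal and one field automorphism, which actually simplifies the equivariance checks. Second, rather than verifying the full cohomological conditions of \cite{Sp13} directly, the paper invokes the Brough--Sp\"ath criterion (Theorem~\ref{thm:criterion}) which, for cyclic $\tX/X$ and cyclic $D$, reduces everything to the existence of the $\Aut(G)$-equivariant bijection plus a mild ``no-fusion'' condition (Proposition~\ref{ibaw-condition-C-2}); the latter drops out of a unitriangular basic set argument (Corollary~\ref{cor:(3)}), so the projective/factor-set checks you anticipate are largely bypassed. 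Third, and most substantively, the unipotent-block matching you describe as ``known'' or ``comparatively rigid'' is in fact the technical heart of the paper: the labelling of $\IBr(B_1)$ rests on Chaneb's recent unitriangular basic set for the bad prime~$2$ combined with the Geck--H\'ezard--Taylor map to unipotent classes (Proposition~\ref{prop:IBR-uni-class}), and the equivariant bijection with $\cW(B_1)$ is then established by a generating-function count (Proposition~\ref{AWC-B0}, Theorem~\ref{thm:equibij}) rather than by any prior iBAW result.
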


The crucial step to establish Theorem \ref{mainthm} is to construct a blockwise
$\Aut(G)$-equivariant bijection between $\IBr(G)$ and $\cW(G)$ for
$G=\Sp_{2n}(q)$, which can be divided into two main parts: determine the action
of $\Aut(G)$ on $\IBr(G)$ and on $\cW(G)$. By constructing an equivariant Jordan
decomposition for both the irreducible $2$-Brauer characters (using the deep
result of Bonnaf\'e--Rouquier \cite{BR03}) and the $2$-weights, we reduce this
problem to the unipotent $2$-block.

For the Brauer characters, Chaneb \cite{Ch18} proved that there is a
unitriangular basic set for the unipotent $2$-block of $G$. Using the
construction given by Geck--H\'ezard \cite{GH08} and Taylor \cite{Tay13}, we
show that there is an $\Aut(G)$-equivariant bijection between the irreducible
Brauer characters in the unipotent $2$-block of $G$ and the unipotent classes
of $G$. From this and the knowledge of the action of automorphisms on ordinary
characters \cite{CS17} we obtain a labelling set for irreducible Brauer
characters, together with the action of automorphisms. On the other hand, the
$2$-weights of $G$ have been classified by An \cite{An93}. Using the list of
$2$-weights there and the methods in \cite{FLZ19a}, we determine the action of
$\Aut(G)$ on $\cW(G)$ from the parametrisations.
\medskip

This paper is structured as follows. After introducing some notation
in \S  \ref{sec:Preli} we formulate a criterion for the inductive blockwise
Alperin weight condition for $\PSp_{2n}(q)$ and the prime $2$ in
\S \ref{sec:iBAW for sym}. The action of automorphisms on the irreducible
$2$-Brauer characters is considered in \S \ref{sec:action-Brauer}, while their
action on the $2$-weights is determined in \S \ref{sec:act wei}. From this we
derive a Jordan decomposition for weights, which we show to hold for a large
class of classical groups. Finally, we establish an equivariant bijection
between the irreducible $2$-Brauer characters and the conjugacy classes of
$2$-weights in \S \ref{sec:equ bijection uni}, thus completing the proof of
our Main Theorem~\ref{mainthm}.
\vskip 1pc
\noindent{\bf Acknowledgement:} We thank Jay Taylor for comments on an
earlier version.

\section{Preliminaries}   \label{sec:Preli}

\subsection{General notation}

Let $G$ be a finite group. Concerning the block and character theory of $G$ we
mainly follow the notation of \cite{Na98}, where for induction and restriction
we use the notation $\Ind$ and $\Res$. For a normal subgroup $N\unlhd G$ we
sometimes identify the (Brauer) characters of $G/N$ with the (Brauer)
characters of $G$ whose kernel contains $N$. 

If a group $A$ acts on a finite set $X$, we denote by $A_x$ the stabiliser of
$x\in X$ in $A$, analogously we denote by $A_{X'}$ the setwise stabiliser of
$X'\subseteq X$.
Let $\ell$ be a prime. If $A$ acts on a finite group $G$ by automorphisms, then
there is a natural action of $A$ on $\Irr(G)\cup\IBr_\ell(G)$ given by
${}^{a^{-1}}\chi(g)=\chi^a(g)=\chi(g^{a^{-1}})$ for every $g\in G$, $a\in A$
and $\chi\in\Irr(G)\cup\IBr_\ell(G)$. For $P\le G$ and
$\chi\in\Irr(G)\cup\IBr_\ell(G)$, we denote by $A_{P,\chi}$ the stabiliser of
$\chi$ in $A_P$. If $A$ is abelian, we denote by $\Lin_{\ell'}(A)$ the complex
linear characters of $A$ of $\ell'$-order and we also identify
$\Lin_{\ell'}(A)$ with $\IBr(A)$.

Let $G\unlhd \tG$ be finite groups with abelian factor group $\tG/G$. For a
weight $(\wR,\widetilde\vhi)$ of $\tG$, Brough and Sp\"ath \cite{BS19} defined
the weights of $G$ covered by $(\wR,\widetilde\vhi)$. In addition, they also
give a Clifford theory for weights between $G$ and $\tG$.

\subsection{Reductive groups}
Let $q$ be a power of a prime $p$. We let $\overline\FF_q$ be an algebraic
closure of the finite field $\FF_q$. As usual, algebraic groups are denoted by
boldface letters. Suppose that $\bG$ is a connected reductive algebraic group
over $\overline\FF_q$ and $F:\bG\to\bG$ a Frobenius endomorphism endowing $\bG$
with an $\FF_q$-structure. Let $\bG^*$ be Langlands dual to $\bG$ with
corresponding Frobenius endomorphism also denoted $F$.

We refer to \cite{GM20} for notation pertaining to the character theory of
finite reductive groups. Let $\ell$ be a prime number different from $p$.
For a semisimple $\ell'$-element $s$ of ${\bG^*}^F$, we denote by
$\cE_\ell(\bG^F,s)$ the union of the Lusztig series $\cE(\bG^F,st)$, where $t$
runs through semisimple $\ell$-elements of ${\bG^*}^F$ commuting with $s$.
By a theorem of Brou\'e--Michel (see \cite[Thm.~9.12]{CE04}), the set
$\cE_\ell(\bG^F,s)$ is a union of $\ell$-blocks of $\bG^F$.

\subsection{Some notation and conventions for symplectic groups} \label{notations-and-conventions}

Let $p$ be an odd prime, and $q$ a power of $p$.
We follow the notation from \cite{An93,FS89}.

We recall from \cite{FS89} that there exists a set $\cF$ of polynomials serving
as elementary divisors for all semisimple elements of symplectic  and orthogonal
groups.
We denote by  $\Irr(\FF_q[x])$ the set of all monic irreducible polynomials
over $\FF_q$. For each $\Delta$ in $\Irr(\FF_q[x])\setminus \{x\}$, we define
$\Delta^*\in\Irr(\FF_q[x])$ to be the polynomial whose roots are the inverses
of the roots of $\Delta$. Now, we denote by
\begin{align*}
 \cF_0&=\left\{ x-1,x+1 ~\right\},\\
 \cF_1&=\left\{ \Delta\in\Irr(\FF_q[x])\mid \Delta\notin \cF_0,\,
   \Delta\neq x,\,\Delta=\Delta^* ~\right\},\\
 \cF_2&=\left\{~ \Delta\Delta^* \mid \Delta\in\Irr(\FF_q[x])\setminus \cF_0,\,
   \Delta\neq x,\,\Delta\ne\Delta^* ~\right\}.
\end{align*}
Let $\cF=\cF_0\cup\cF_1\cup\cF_2$.
For $\Ga\in\cF$ denote by $d_\Ga$ its degree and by $\delta_\Ga$ its
\emph{reduced degree} defined by
$$\delta_\Ga=
  \begin{cases} d_\Ga & \text{if}\ \Ga\in\cF_0, \\
     \frac{1}{2}d_\Ga & \text{if}\ \Ga\in\cF_1\cup \cF_2.
\end{cases}$$
Since the polynomials in $\cF_1\cup \cF_2$ have even degree, $\delta_\Ga$ is
an integer. In addition, we mention a sign $\veps_\Ga$ for
$\Ga\in\cF$ defined by
$$\veps_\Ga=
  \begin{cases} \varepsilon & \text{if}\ \Ga\in\cF_0, \\
    -1 & \text{if}\ \Ga\in\cF_1, \\
     1 & \text{if}\ \Ga\in\cF_2.
\end{cases}$$

Let $V$ be a finite-dimensional symplectic or orthogonal space over $\FF_q$.
We denote by $I(V)$ the group of isometries of $V$. Any semisimple element
$s\in I(V)$ induces a unique orthogonal decomposition
$$
  V=\sum\limits_{\Ga\in\cF} V_\Ga(s), \quad s=\prod_{\Ga\in\cF}s(\Ga),
$$
where the $V_\Ga(s)$ are non-degenerate subspaces of $V$,
$s(\Ga)\in I(V_\Ga(s))$, and $s(\Ga)$ has minimal polynomial $\Ga$.  Let
$m_\Ga(s)$ be the multiplicity of $\Ga$ in $s(\Ga)$. If $m_\Ga(s)\ne 0$, then
we say $\Ga$ is an \emph{elementary divisor} of $s$. The centraliser of $s$ in
$I(V)$ has a decomposition $C_{I(V)}(s)=\prod_{\Ga}C_\Ga(s)$, where, by
\cite[(1.13)]{FS89}, 
\begin{equation}   \label{eq:centr}
  C_\Ga(s)=C_{I(V_\Ga(s))}(s(\Ga))=
    \begin{cases} I(V_\Ga(s)) & \text{if}\ \Ga\in\cF_0, \\
    \GL_{m_\Ga(s)}(\veps_\Ga q^{\delta_\Ga}) & \text{if}\ \Ga\in\cF_1\cup\cF_2.
\end{cases}
\addtocounter{thm}{1}\tag{\thethm}
\end{equation}
Here, as customary, $\GL_m(-q)$ means $\GU_m(q)$.

\section{Symplectic groups and the inductive blockwise Alperin weight condition}
\label{sec:iBAW for sym}

Throughout we let $G=\Sp_{2n}(q)$, $\tG=\CSp_{2n}(q)$ for an odd prime power
$q$ and we consider modular representations with respect to the prime~$2$. 
We view $G$ and $\tG$ as the groups of fixed points under Frobenius
endomorphisms of certain connected reductive algebraic groups: Let
$\bG=\Sp_{2n}(\overline\FF_q)$, $\tbG=\CSp_{2n}(\overline\FF_q)$. Note that
$\tbG$ has connected centre, so the containment $\bG\le\tbG$ is a regular
embedding (see \cite[Def.~1.7.1]{GM20}). Then $G=\bG^F$ and $\tG=\tbG^F$,
where $F$ is the standard Frobenius endomorphism. In addition, $\tbG^*$ is the
corresponding special Clifford group and $\bG^* = \SO_{2n+1}(\overline\FF_q)$.
We write $\pi:\tbG^*\to\bG^*$ for the surjection induced by the regular
embedding $\bG\hookrightarrow\tbG$. Here, $\tG^* = (\tbG^*)^F$ is a special
Clifford group over $\FF_q$ and $G^* = {\bG^*}^F=\SO_{2n+1}(q)$.

\begin{lem}   \label{block-typeC}
 \begin{enumerate}[\rm(a)]
  \item For every semisimple $2'$-element $\ts\in\tG^*$, $\cE_2(\tG,\ts)$ is
   a $2$-block of $\tG$
  \item For every semisimple $2'$-element $s\in G^*$, $\cE_2(G,s)$ is a
   $2$-block of $G$.
 \end{enumerate}
\end{lem}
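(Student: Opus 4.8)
The plan is to reduce the statement to a known criterion for when a union of Lusztig series $\cE_\ell(\bG^F,s)$ forms a single block. By the theorem of Broué--Michel quoted in the excerpt, $\cE_2(\tG,\ts)$ (resp. $\cE_2(G,s)$) is always a \emph{union} of $2$-blocks; the content of the lemma is that this union consists of a \emph{single} block. A standard sufficient condition (see e.g. \cite[Thm.~9.12]{CE04} together with the criterion of Cabanes--Enguehard, or the work of Kessar--Malle on the number of blocks in a Lusztig series) is that $C_{\bG^*}(s)$ is connected: in that case $\cE_\ell(\bG^F,s)$ is a single block. So the heart of the proof is to check connectedness of the relevant centralisers.

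First I would handle part (a). Since $\tbG$ has connected centre, its dual $\tbG^*$ has connected derived subgroup, and in fact for any semisimple element $\ts\in\tbG^{*F}$ the centraliser $C_{\tbG^*}(\ts)$ is connected (this is the standard consequence of $Z(\tbG)$ being connected, via the duality between connectedness of centralisers and simple-connectedness/connected-centre properties). Hence $\cE_2(\tG,\ts)$ is a single $2$-block of $\tG$, giving~(a). For part (b), $\bG^* = \SO_{2n+1}(\overline\FF_q)$ has disconnected centre is not the issue --- rather $\bG = \Sp_{2n}$ is simply connected, so $\bG^* = \SO_{2n+1}$ is adjoint and therefore $C_{\bG^*}(s)$ is connected for \emph{every} semisimple $s\in\bG^{*F}$ (centralisers of semisimple elements in a connected reductive group with connected centre are connected; $\SO_{2n+1}$ in odd characteristic has connected centre, being adjoint of type $\mathsf B_n$). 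Thus again the criterion applies and $\cE_2(G,s)$ is a single $2$-block of $G$, proving~(b).

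Alternatively, and perhaps more in the spirit of the rest of the paper, I would argue via the explicit description of centralisers in~\eqref{eq:centr}: for $s\in G^* = \SO_{2n+1}(q)$ a semisimple $2'$-element, $C_{I(V)}(s) = \prod_\Ga C_\Ga(s)$ with each factor either a full orthogonal/symplectic group $I(V_\Ga(s))$ for $\Ga\in\cF_0$ or a (general linear or unitary) group $\GL_{m_\Ga(s)}(\veps_\Ga q^{\delta_\Ga})$ for $\Ga\in\cF_1\cup\cF_2$; one then checks that the relevant group of components is trivial, so that passing from $I(V)$ to $\SO_{2n+1}$ one still gets a connected centraliser --- here the key point is that $s$ has odd order, so the eigenvalue $1$ (resp. the elementary divisor $x-1$) contributes an odd-dimensional orthogonal space and the constraint from $\cF_0$ behaves well, and there is no obstruction coming from the component group of $\GO$ versus $\SO$. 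With connectedness in hand, \cite[Thm.~9.12]{CE04} (or the Bonnaf\'e--Rouquier results) yields that $\cE_2(G,s)$ is a single block.

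The main obstacle I anticipate is bookkeeping rather than conceptual: one must be careful about which group the ``$2'$-element $s$'' lives in (here $\tG^*$ resp. $G^*$, not the ambient isometry group $I(V)$) and about the precise statement of the block-counting criterion --- in particular whether one needs $C_{\bG^*}(s)$ connected or the slightly weaker condition on $\ell$-series, and whether the prime $2$ being bad for type $\mathsf C$ causes any trouble (it does not for \emph{this} statement, since the obstruction for bad primes concerns the internal structure of the blocks, not their number). Once the correct centraliser is identified, connectedness is automatic from $\bG$ being simply connected and $\tbG$ having connected centre, so both parts follow uniformly.
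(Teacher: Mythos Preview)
Your central criterion --- that connectedness of $C_{\bG^*}(s)$ forces $\cE_\ell(\bG^F,s)$ to be a single $\ell$-block --- is false. Take $\bG = \GL_n$, $s=1$, and $\ell$ odd with $e := \mathrm{ord}_\ell(q) > 1$: the centraliser is all of $\bG^*$, certainly connected, yet the unipotent $\ell$-blocks are parametrised by $e$-cores and there are many of them once $n \ge e$. Connectedness of $C_{\bG^*}(s)$ does give (via Bonnaf\'e--Rouquier, say) a bijection between the blocks in $\cE_\ell(\bG^F,s)$ and the unipotent $\ell$-blocks of $C_{\bG^*}(s)^F$; but you then still need that the latter group has a \emph{unique} unipotent $2$-block. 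That is the actual content for $\ell=2$ and classical groups, and it is exactly what \cite[Thm.~21.14]{CE04} supplies --- the paper simply invokes that theorem for $\tG$ and $G$ directly, with no detour through centralisers.

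A secondary error: your reasoning for connectedness in~(b) is backwards. ``Adjoint'' or ``connected centre'' does \emph{not} imply connected centralisers of semisimple elements --- $\mathrm{PGL}_2$ with the image of $\mathrm{diag}(1,-1)$ is already a counterexample, and so is $\SO_{2n+1}$ itself at any semisimple involution. The implication that holds is: simply connected derived subgroup $\Rightarrow$ connected centralisers; but $\SO_{2n+1}$ is not simply connected. What actually saves you here is that $s$ is a $2'$-element, so $-1$ is not an eigenvalue and the decomposition~\eqref{eq:centr} has no even-orthogonal factor --- precisely the point you make in your alternative argument. That computation is the correct route to connectedness; the false general principle should be dropped. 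Even so, connectedness alone will not finish the proof, for the reason in the previous paragraph.
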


\begin{proof}
This follows by \cite[Thm.~21.14]{CE04}. 
\end{proof}

We write $\tilde B_{\ts}:=\cE_2(\tG,\ts)$ and $B_s:=\cE_2(G, s)$ for the
2-blocks of $\tG$ and $G$ parametrised by semisimple $2'$-elements
$\ts\in\tG^*$ and $s\in G^*$ respectively.

Let $F_p$ be the field automorphism of $G$ which sends $(a_{ij})$ to
$(a_{ij}^p)$ and let $\delta$ be a diagonal automorphism of $G$ (\emph{i.e.},
the conjugation by an element of $\tG\setminus GZ(\tG)$). Then by
\cite[Thm.~2.5.1]{GLS98}, $\tG\rtimes\langle F_p \rangle$ induces all
automorphisms of $G$.
In addition, $\Out(G)=\langle\delta\rangle\times\langle F_p \rangle$.

\begin{cor}   \label{block-covering}
 Let $B$ be a $2$-block of $G$. Then
 \begin{enumerate}[\rm(a)]
  \item $B^\delta=B$.
  \item There are $(q-1)_{2'}$ blocks of $\tG$ covering $B$. Furthermore, if
   $B=B_s$ and $\ts\in\pi^{-1}(s)$, then the blocks of $\tG$ covering $B$ are
   $\wB_{\tilde z\ts}$, where $\tilde z$ runs through
   $\mathcal O_{2'}(Z(\tG^*))$.
 \end{enumerate}
\end{cor}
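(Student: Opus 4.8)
The plan is to exploit the regular embedding $\bG\le\tbG$ together with the explicit parametrisation of $2$-blocks by semisimple $2'$-elements of the dual groups, and the standard Clifford theory of blocks for the abelian quotient $\tG/GZ(\tG)$.

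First I would prove (a). The automorphism $\delta$ is conjugation by an element $\tilde g\in\tG$, so it preserves $\cE_2(G,s)$ if and only if it fixes the $\tG$-orbit of the $G^*$-class of $s$ under the dual action. But $\delta$ is a diagonal automorphism, and the dual of conjugation by $\tG$ on $G$ is conjugation by $\tG^*$ on $G^*$ (via $\pi$); since $\tG^*/G^*$ acts on the set of $G^*$-conjugacy classes of semisimple elements, and conjugation by $\tG^*$ does not change the $\tbG^*$-class, one checks using \cite[(1.13)]{FS89}, i.e.\ formula \eqref{eq:centr}, that the $G^*$-class of a semisimple element is already $\tG^*$-stable. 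More concretely, in $G^*=\SO_{2n+1}(q)$ every semisimple element is conjugate to its inverse-transpose and the centraliser structure shows the relevant $\GO/\SO$-issue does not split the class; hence $B_s^\delta=B_s$. (Alternatively, one invokes that $\delta$ has $2$-power order on $\Out(G)$ acting on the set of $2'$-elements, together with the fact that $Z(\tG)$ acts trivially on $2$-blocks of $G$ because $Z(\tG)$ is central.) I would phrase this as: the set $\Irr(\cE_2(\tG,\ts))$ covers $\Irr(\cE_2(G,s))$, and $\delta$ fixes $\cE_2(\tG,\ts)$ up to multiplication by a linear character of $\tG/G$, which preserves the covered block of $G$.

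Next, for (b), by Clifford theory of blocks (see \cite{Na98}), the number of blocks of $\tG$ covering $B$ times the number of $\tG$-conjugates in that covering equals the order of a certain subquotient of the abelian group $\tG/G$; since $\tG/G$ acts on the set of blocks of $G$ by tensoring with $\IBr(\tG/G)$, and by (a) this action stabilises $B$, the blocks of $\tG$ covering $B$ form a single $\Lin_{2'}(\tG/G)$-orbit, hence an orbit under $\Lin_{2'}(\tG/GZ)\cong \mathcal O_{2'}(Z(\tG^*))$ via the duality $\tG/G \leftrightarrow Z(\tbG^*)^F$. The dual statement is that tensoring $B_s$ with a linear character corresponding to $\tilde z\in Z(\tG^*)$ sends the parametrising element $\ts$ to $\tilde z\ts$; this is the standard behaviour of Lusztig series under central twists (\cite[Prop.~8.26]{CE04} or \cite{GM20}), and passing to $\cE_2$ is immediate. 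Since $\tbG$ has connected centre, $\cE_2(\tG,\ts)$ is a single block for each $\ts$ (this is precisely Lemma~\ref{block-typeC}), and distinct $2'$-classes $\tilde z\ts$ with $\tilde z\in\mathcal O_{2'}(Z(\tG^*))$ give distinct blocks unless $\tilde z\ts$ and $\tilde z'\ts$ are $\tG^*$-conjugate; a short argument with the $2'$-part of $Z(\tG^*)$ and the fact that $\pi^{-1}(s)$ is a single $\mathcal O(Z(\tG^*))$-coset shows the stabiliser of $B$ in this action is trivial, giving exactly $|\mathcal O_{2'}(Z(\tG^*))| = (q-1)_{2'}$ covering blocks. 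The last equality uses $Z(\tG^*)^F\cong \FF_q^\times$ for the special Clifford group, so its $2'$-part has order $(q-1)_{2'}$.

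The main obstacle I expect is the bookkeeping in the second bullet: verifying that the action of $\Lin_{2'}(\tG/G)$ on blocks of $\tG$ is \emph{free} on the set of blocks covering a fixed $B$ — equivalently that no nontrivial $2'$-part central twist $\tilde z$ fixes $\wB_\ts$. This requires knowing that $\ts$ and $\tilde z\ts$ are not $\tG^*$-conjugate for $1\ne\tilde z\in\mathcal O_{2'}(Z(\tG^*))$, which in turn rests on the fact (from the structure of the special Clifford group, or from $\tbG$ having connected centre so that $\cE(\tG,\ts)$-series are disjoint for non-conjugate $\ts$) that central elements are not absorbed into conjugacy; I would isolate this as the key lemma and prove it by a direct computation with $Z(\tG^*)$ and the fusion of semisimple classes in $\tG^* = (\tbG^*)^F$. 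Everything else — the reduction to covering, the identification $\tG/GZ(\tG)\leftrightarrow\mathcal O_{2'}(Z(\tG^*))$, and the count $(q-1)_{2'}$ — is routine given the setup already recorded in \S\ref{sec:iBAW for sym}.
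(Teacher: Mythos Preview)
Your plan is correct and aligns with the paper's proof: for (a) the paper simply notes that diagonal automorphisms stabilise all semisimple conjugacy classes of $G^*$ and hence all Lusztig series (so $B_s^\delta=B_s$ via Lemma~\ref{block-typeC}), and for (b) it uses exactly the covering relation between $\cE_2(\tG,\ts)$ and $\cE_2(G,s)$ together with the non-conjugacy of distinct central translates $\tilde z\ts$, citing \cite[(2D)]{FS89} for precisely the ``key lemma'' you isolate. Your parenthetical alternative for (a) via ``$\delta$ has $2$-power order'' is not a valid shortcut on its own (an involution can still have orbits of length~$2$ on blocks), so stick with the main argument.
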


\begin{proof}
Part~(a) is a direct consequence of Lemma \ref{block-typeC} as diagonal
automorphism stabilise all semisimple conjugacy classes and hence all
Lusztig series, and (b) follows from the fact that $\cE_2(\tG,\ts)$ contains
constituents of the induction to $\tG$ of the characters in $\cE_2(G,s)$, and
for $z_1,z_2\in Z(\tG^*)$, $\tilde z_1\ts$ and $\tilde z_2\ts$ are not
$\tG^*$-conjugate if $z_1\ne z_2$
(cf. \cite[(2D)]{FS89}).
\end{proof}

For a finite group $H$, we denote by $\cW(H)$ the set of $H$-conjugacy classes
of weights of $H$. We write $\overline{(R,\vhi)}\in\cW(H)$ for the class of a
weight $(R,\vhi)$ of $H$. If $\cB$ is a union of blocks of $H$, we write
$\cW(\cB)=\bigcup\limits_{B\in \cB}^\cdot \cW(B)$.

We use the criterion for the inductive blockwise Alperin weight condition given
by Brough and Sp\"ath \cite[Thm.]{BS19}.  As we will be considering groups of
type $\mathsf C$, we also make some simplifying assumptions: cyclic quotient
$\tX/X$ and cyclic $D$.

\begin{thm}[{\cite[Thm.]{BS19}}]   \label{thm:criterion}
 Let $S$ be a finite non-abelian simple group and $\ell$ a prime dividing $|S|$.
 Let $X$ be the full $\ell'$-covering group of $S$, $B$ an $\ell$-block of $X$
 and assume there are groups $\tX$, $D$ such that $X \unlhd \tX \rtimes D$ and
 the following hold:
 \begin{enumerate}[\rm(1)]\setlength{\itemsep}{0pt}
		\item \begin{enumerate}[\rm(i)]\setlength{\itemsep}{0pt}
			\item $X=[\tX,\tX]$, 
			\item $C_{\tX D}(G)=Z(\tX)$ and $\tX D/Z(\tX) \cong \Aut(X)$,
			\item $\Out(X)$ is abelian,
			\item both $\tX/X$ and $D$ are cyclic.
		\end{enumerate}
		\item Let $\wcB$ be the union of $\ell$-blocks of $\tX$ covering $B$.
		There exists a $\Lin_{\ell' }(\tX/X) \rtimes D_{\wcB}$-equivariant
		bijection $\wOm_{\wcB}: \IBr(\wcB) \to \cW(\wcB)$ such that
		$\wOm_{\wcB}(\IBr(\tilde B)) = \cW(\tilde B)$
		for every $\tilde B \in \wcB$,
		and $\J_X(\tpsi) = \J_X(\wOm_{\wcB}(\tpsi))$ for every $\tpsi \in \IBr({\wcB})$.
		\item For every $\tpsi\in\IBr(\wcB)$, there exists some $\psi_0\in\IBr(X\mid\tpsi)$ such that
		$(\tX\rtimes D)_{\psi_0}=\tX_{\psi_0}\rtimes D_{\psi_0}$,
		\item In every $\tX$-orbit on $\cW(B)$, there is a $(R,\vhi_0)$ such
		that $(\tX D)_{R,\vhi_0} = \tX_{R,\vhi_0} (XD)_{R,\vhi_0}$,
 \end{enumerate}
 Then the inductive blockwise Alperin weight condition holds for the block $B$.
\end{thm}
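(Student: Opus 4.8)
The plan is to obtain this statement as a specialisation of the general criterion of Brough--Sp\"ath \cite[Thm.]{BS19}, so the ``proof'' amounts to matching our hypotheses with theirs. First I would recall the shape of that criterion: from a group-theoretic configuration $X\unlhd\tX\rtimes D$ with $\tX/X$ abelian, $\tX D/Z(\tX)\cong\Aut(X)$ and $\Out(X)$ abelian, together with an $\Aut$-equivariant bijection between irreducible $\ell$-Brauer characters and weights \emph{at the level of the overgroup} $\tX$ which is block-compatible and preserves the Clifford-theoretic invariant $\J_X$, and two ``maximal extendibility''-type conditions ensuring that for Brauer characters (resp.\ for weights) one can choose constituents (resp.\ representatives) whose stabiliser in $\tX\rtimes D$ splits as a product, \cite{BS19} deduces the inductive blockwise Alperin weight condition for the block $B$ of $X$.

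Next I would check the dictionary. Hypotheses (1)(i)--(iii) are exactly the structural assumptions of loc.\ cit.; (1)(iv), the cyclicity of $\tX/X$ and of $D$, is the extra simplifying input we impose because it holds for type $\mathsf C$ (here $\tX/X$ is cyclic of order dividing $q-1$ and $D=\langle F_p\rangle$ is cyclic). This cyclicity only helps: it makes the relevant Clifford theory over $X$ tame, so the cohomological obstructions appearing in the general formulation of \cite{BS19} are automatically trivial and the data encoded by $\J_X$ reduces to the stabiliser in $\tX\rtimes D$ together with an honest (not merely projective) extension. Conditions (2), (3), (4) are then precisely the bijection condition and the two extendibility conditions required by \cite{BS19} in this setting, with $\wOm_{\wcB}$ playing the role of the $\tX$-level bijection and the equality $\J_X(\tpsi)=\J_X(\wOm_{\wcB}(\tpsi))$ recording the invariance.

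With the dictionary in place, the conclusion follows: the Brough--Sp\"ath construction uses $\wOm_{\wcB}$ together with (3) and (4) to descend to an $\Aut(X)_B$-equivariant, block-preserving bijection $\IBr(B)\to\cW(B)$ compatible with the Clifford theory relative to $\tX$ and $D$, which is exactly what Sp\"ath's reduction theorem \cite{Sp13} demands for the inductive blockwise Alperin weight condition to hold for $B$. I do not expect any real obstacle in the theorem itself --- the statement is essentially a citation --- and the single point deserving a word is the verification that cyclicity of $\tX/X$ and $D$ places us genuinely inside the hypotheses of \cite{BS19}; the substantive work, namely establishing conditions (2)--(4) for $X=\Sp_{2n}(q)$ and $\ell=2$, is carried out in the later sections and is where the difficulty lies.
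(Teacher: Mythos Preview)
Your proposal is correct and matches the paper's approach: this theorem is stated without proof in the paper, as it is a direct citation of the Brough--Sp\"ath criterion \cite{BS19}, specialised under the additional cyclicity assumptions (1)(iv). Your explanation of why the cyclicity hypotheses place one inside the framework of \cite{BS19} is more than the paper itself provides and is entirely accurate.
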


For the definitions of $\J_X(\tpsi)$ and $\J_X(\wOm_{\wcB}(\tpsi))$, see
\cite[\S 2]{BS19}.

Considering the simple group $\PSp_{2n}(q)$ and the prime $\ell=2$, we have the
following.

\begin{prop}   \label{ibaw-condition-C-2}
 Let $S=\PSp_{2n}(q)$ (with $n\ge2$ and odd $q$) and $G=\Sp_{2n}(q)$. Then the
 inductive blockwise Alperin weight condition holds for $S$ and the prime $2$
 if the following are satisfied:
 \begin{enumerate}[\rm(1)]
  \item There is an $\Aut(G)$-equivariant bijection $\Omega:\IBr(G)\to\cW(G)$
   preserving blocks;
  \item for any $\chi\in\IBr(G)$ and  $\sigma\in\langle F_p\rangle$, if
   $\chi^\delta=\chi^\sigma$ then
   $\chi^\delta=\chi^\sigma=\chi$; and
  \item for any 2-weight $(R,\vhi)$ of $G$ and  $\sigma\in\langle F_p\rangle$,
 if
   $\overline{(R,\vhi)}^\delta=\overline{(R,\vhi)}^{\sigma}$ then
   $\overline{(R,\vhi)}^\delta=\overline{(R,\vhi)}^{\sigma}=\overline{(R,\vhi)}$.
 \end{enumerate}
\end{prop}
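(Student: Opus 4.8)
The plan is to verify the four hypotheses (1)--(4) of Theorem~\ref{thm:criterion} for $X=G=\Sp_{2n}(q)$ (which is the full $2'$-covering group of $S=\PSp_{2n}(q)$ since $q$ is odd and $n\ge2$), with $\tX=\tG=\CSp_{2n}(q)$ and $D=\langle F_p\rangle$. Hypothesis~(1) is essentially structural: part~(i) holds since $\Sp_{2n}$ is its own derived subgroup; part~(ii) follows from $C_{\tbG}(\bG)=Z(\tbG)$ and the description $\tG\rtimes\langle F_p\rangle\twoheadrightarrow\Aut(G)$ recalled after Corollary~\ref{block-covering}; part~(iii) holds because $\Out(G)=\langle\delta\rangle\times\langle F_p\rangle$ is abelian; and part~(iv) holds because $\tG/G\cong\FF_q^\times$ is cyclic and $\langle F_p\rangle$ is cyclic. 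These are all already recorded in the excerpt, so (1) requires only assembly.

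For hypothesis~(2), I would fix a block $B$ of $G$ and let $\wcB$ be the union of the $2$-blocks of $\tG$ covering $B$; by Corollary~\ref{block-covering}(b) these are the $\wB_{\tilde z\ts}$ for $\tilde z\in\mathcal O_{2'}(Z(\tG^*))$ and a fixed $\ts\in\pi^{-1}(s)$. The bijection $\wOm_{\wcB}$ should be built by Clifford theory from the given $\Aut(G)$-equivariant block-preserving bijection $\Omega\colon\IBr(G)\to\cW(G)$ of hypothesis~(1) of the Proposition: on the Brauer-character side one extends along $G\unlhd\tG$ using that $\tG/G$ is cyclic and abelian, and on the weight side one uses the Clifford theory for weights between $G$ and $\tG$ of Brough--Sp\"ath \cite{BS19} referenced in \S\ref{sec:Preli}. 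Compatibility with $\Lin_{2'}(\tG/G)\rtimes D_{\wcB}$ follows from the $\Aut(G)$-equivariance of $\Omega$ together with the fact that $\Lin_{2'}(\tG/G)$ acts by tensoring (which corresponds on weights to the dual action in the Brough--Sp\"ath dictionary); the condition $\wOm_{\wcB}(\IBr(\tilde B))=\cW(\tilde B)$ for each individual $\tilde B\in\wcB$ holds because $\Omega$ preserves blocks and the central characters / Lusztig labels are respected. The identity $\J_G(\tpsi)=\J_G(\wOm_{\wcB}(\tpsi))$ asks that the stabiliser data attached by Brough--Sp\"ath on both sides agree; this is where I expect genuine work, and I would reduce it to the equivariance of $\Omega$ plus a check that $\Omega$ matches up the action of $\tG$ and of $\Lin_{2'}(\tG/G)$ appropriately — i.e. a refined, ``Clifford-theoretic'' version of equivariance that the construction of $\Omega$ in later sections must be arranged to supply.

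Hypotheses~(3) and~(4) are the ``stabiliser splitting'' conditions, and here is where conditions~(2) and~(3) of the Proposition enter. For~(3), given $\tpsi\in\IBr(\wcB)$ I must find $\psi_0\in\IBr(G\mid\tpsi)$ with $(\tG\rtimes D)_{\psi_0}=\tG_{\psi_0}\rtimes D_{\psi_0}$. Since $D=\langle F_p\rangle$ is cyclic, a standard argument shows the only obstruction is the possibility that some $\psi_0$ is fixed by a nontrivial coset $\tG_{\psi_0}\cdot\sigma$ with $\sigma\in D$ but $\psi_0$ is not itself $\sigma$-fixed and not $\delta$-fixed in a compatible way — precisely the situation ruled out by hypothesis~(2) of the Proposition, which says $\chi^\delta=\chi^\sigma$ forces $\chi^\delta=\chi^\sigma=\chi$. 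So I would pick $\psi_0$ in the $G$-orbit under $\tG$ inside $\IBr(G\mid\tpsi)$ on which the relevant diagonal automorphism acts trivially (using that $\tG/GZ(\tG)$ is cyclic, such an element exists), and then hypothesis~(2) upgrades ``$D$ stabilises the orbit'' to ``$D$ stabilises $\psi_0$'', giving the desired semidirect decomposition. Hypothesis~(4) is the exact mirror of this for weights: in each $\tG$-orbit on $\cW(B)$ choose a representative $(R,\vhi_0)$ with trivial diagonal action, and invoke hypothesis~(3) of the Proposition to conclude $(\tG D)_{R,\vhi_0}=\tG_{R,\vhi_0}(GD)_{R,\vhi_0}$.

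The main obstacle I anticipate is not any single one of these steps in isolation but the requirement that one \emph{and the same} bijection $\Omega$ simultaneously be $\Aut(G)$-equivariant, block-preserving, \emph{and} compatible with Clifford theory over $\tG$ in the strong sense needed for the $\J_G$-identity in hypothesis~(2) of Theorem~\ref{thm:criterion}; producing such an $\Omega$ is exactly the content of the later sections (the Jordan decomposition for Brauer characters via \cite{BR03}, the labelling of $\IBr(G)$ from \cite{Ch18,GH08,Tay13,CS17}, and the analysis of An's classification \cite{An93} of $2$-weights), and the proof of the Proposition proper should be read as the reduction that makes those three ingredients sufficient. Conditions~(2) and~(3) of the Proposition are the minimal ``extra'' equivariance-type inputs beyond a plain equivariant bijection that the Brough--Sp\"ath criterion cannot do without, and isolating exactly these is the key observation.
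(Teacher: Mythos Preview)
Your overall approach matches the paper's, but there is one genuine slip at the outset: $G=\Sp_{2n}(q)$ is \emph{not} the full $2'$-covering group of $S=\PSp_{2n}(q)$. Since $Z(G)\cong\ZZ/2\ZZ$ is a $2$-group, the full $2'$-covering group of $S$ is $S$ itself. The paper therefore takes $X=S$ and $\tX=\mathrm{PCSp}_{2n}(q)$ in Theorem~\ref{thm:criterion}, and then observes that because $Z(G)$ lies in the kernel of every irreducible $2$-Brauer character and of every weight character of $G$, one may replace $X,\tX$ by $G,\tG$ when verifying conditions~(2)--(4). Your setup with $X=G$ does not literally fit the hypotheses of the criterion, so this passage needs to be inserted.

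Two smaller points where the paper is cleaner than your sketch. First, your argument for conditions~(3) and~(4) of Theorem~\ref{thm:criterion} is more elaborate than necessary: hypothesis~(2) of the Proposition is strong enough that \emph{any} $\psi_0\in\IBr(G\mid\tpsi)$ works, not only a specially chosen one. Indeed, if $(g,\sigma)\in(\tG\rtimes D)_{\psi_0}$ with $g$ acting nontrivially (hence as $\delta$), then $\psi_0^\sigma=\psi_0^\delta$, and hypothesis~(2) forces $\psi_0^\sigma=\psi_0^\delta=\psi_0$; so the stabiliser always splits. The same remark applies to weights via hypothesis~(3). Second, the $\J_G$-condition is simpler than you anticipate: since $\tG/G$ is cyclic, $\J_G(\tpsi)$ and $\J_G(\wOm_{\wcB}(\tpsi))$ are each determined by the $2$-part of the length of the corresponding $\tG$-orbit below, and these lengths agree because $\Omega$ is $\Aut(G)$-equivariant (hence $\tG$-equivariant). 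No further ``Clifford-theoretic refinement'' beyond hypothesis~(1) of the Proposition is required.
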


\begin{proof}
By \cite[\S 6.1]{GLS98}, the group $G$ is the universal covering group of $S$.
Let $X=S$, $\tilde X=\mathrm{PCSp}_{2n}(q)$ and $D=\langle F_p\rangle$.
Then Theorem~\ref{thm:criterion}~(1) holds.
When considering Theorem~\ref{thm:criterion}~(2)--(4), we may replace $X$,
$\tilde X$ by $G$, $\tG=\CSp_{2n}(q)$ respectively, since $Z(G)$ is contained
in the kernel of every irreducible 2-Brauer character of $G$ and in the kernel
of every weight character. Therefore, the conditions~(3) and (4) of
Theorem~\ref{thm:criterion} are guaranteed by~(2) and (3) respectively.

For a $2$-block $B=B_s$ of $G$, let $\ts\in\pi^{-1}(s)$ and $\wB=\wB_{\ts}$.
Let $\wcB$ be the union of blocks of $\tG$ covering $B$. Then by
Corollary~\ref{block-covering}, $\wcB$ is the union of blocks
$\wB_{\tilde z\ts}$, where $\tilde z$ runs through $\mathcal O_{2'}(Z(\tG^*))$.
In addition, $D_{\wcB}=D_B$. 

Let $\chi\in\Irr(B)$. Note that $|\IBr(\tG\mid\chi)|\le (q-1)_{2'}$. So by
Corollary~\ref{block-covering}~(b), $|\IBr(\wB)\cap \IBr(\tG\mid\chi)|=1$ and
we write $\wchi$ for the unique character in $\IBr(\wB)\cap \IBr(\tG\mid\chi)$.
On the other hand, we let $\overline{(\wR,\widetilde\vhi)}\in\cW(\wB)$ such
that $(\wR,\widetilde\vhi)$ covers $\Omega(\chi)$. Since $\Omega$ is
$\Aut(G)$-equivariant, $\overline{(\wR,\widetilde\vhi)}$ is also unique by a
similar argument. We then define
$\wOm_{\wB}(\wchi):=\overline{(\wR,\widetilde\vhi)}$.
If $\chi$ runs through a $\langle\delta\rangle$-transversal in
$\IBr(B)$, then $\wchi$ runs through $\IBr(\wB)$. Thus, $\wOm_{\wB}$ is a
bijection from $\IBr(\wB)$ to $\cW(\wB)$ since $\Omega$ is
$\Aut(G)$-equivariant. We extend $\wOm_{\wB}$ to $\wcB$:
for $\eta\in \Lin_{\ell'}(\tG/G)$ we set $\wOm_{\wcB}(\eta \wchi):=
\overline{(\wR,\Res^{\tG}_{N_{\tG}(\wR)}(\eta)\widetilde\vhi)}$.
Thus $\wOm_{\wcB}$ is a $\Lin_{\ell'}(\tG/G)$-equivariant bijection from
$\IBr(\wcB)$ to $\cW(\wcB)$.

We claim that $\wOm_{\wcB}$ is $D_{\wcB}$-equivariant. In fact, for
$\wchi\in\IBr(\wB)$, $\chi\in\IBr(G\mid \wchi)$ and $\sigma\in D_{\wcB}$, the
character $\wchi^\sigma$ is the unique irreducible character in $\wB^\sigma$
lying over $\chi^\sigma$. Let
$\overline{(\wR,\widetilde\vhi)}:=\wOm_{\wcB}(\wchi)$. Then
$\overline{(\wR,\widetilde\vhi)}^\sigma$ is the unique weight in $\wB^\sigma$
covering $\Omega(\chi)^\sigma=\Omega(\chi^\sigma)$. By the definition of
$\wOm_{\wcB}$, one have
$\wOm_{\wcB}(\wchi^\sigma)=\overline{(\wR,\widetilde\vhi)}^\sigma$. This implies
that $\wOm_{\wcB}$ is $D_{\wcB}$-equivariant and preserves blocks.

Finally, the condition $\J_G(\tpsi) = \J_G(\wOm_{\wcB}(\tpsi))$ for
$\psi\in\IBr(\wcB)$ can be verified from the assumption that $\Omega$ is
$\Aut(G)$-equivariant. Indeed, since $\tG/G$ is cyclic, $\J_G(\tpsi)$ (or
$\J_G(\wOm_{\wcB}(\tpsi))$) is determined by the $\ell$-part of the number of
Brauer characters (of weights) which are $\tG$-conjugate to $\tpsi$ (or
$\wOm_{\wcB}(\tpsi)$). Thus condition~(2) of Theorem~\ref{thm:criterion} holds
and this completes the proof.
\end{proof}

\begin{rmk}   \label{equiva-condition(ii)(iii)}
We note that in Proposition \ref{ibaw-condition-C-2}, if (1) holds, then (2) and
(3) are equivalent.
\end{rmk}

\section{Action of automorphisms on $2$-Brauer characters}
\label{sec:action-Brauer}

\subsection{Action of automorphisms on $2$-Brauer characters in the unipotent $2$-block of $\Sp_{2n}(q)$}

In this section we relate the irreducible Brauer characters in the principal
2-block of $G=\Sp_{2n}(q)$ to the unipotent conjugacy classes of $G$. The main
tool is a result of Chaneb which in turn relies on results of Geck--H\'ezard
and Taylor.

We start by recalling the parametrisation of unipotent conjugacy classes and the
action of outer automorphisms. Let $q$ be a power of an odd prime $p$. The
unipotent conjugacy classes of $\bG=\Sp_{2n}(\overline\FF_q)$ are parametrised
by their Jordan normal forms. There exist unipotent elements with Jordan block
shape described by a partition $\la\vdash 2n$ in $\bG$ if and only if any odd
part occurs an even number of times in $\la$. That is, if $c_i$ denotes the
number of parts of
$\la$ equal to $i$, then $ic_i\equiv0\pmod2$ for all $i$. It follows from this
that all unipotent classes have representatives already over the prime field
$\FF_p$ and thus are $F$-stable for any Frobenius endomorphism $F$ of $\bG$.
We write $\bC_\la$ for the unipotent class of $\bG$ parametrised by $\la$.
Furthermore, the group of components $A_\bG(u):=C_\bG(x)/C_\bG^\circ(x)$ for
$x\in \bC_\la$ is $(\ZZ/2\ZZ)^{a(\la)}$ where $a(\la)$ is the number of even $i$
with $c_i>0$ (see e.g. \cite[\S I.2]{Sp82}). Thus $\bC_\la^F\cap\bG^F$ splits
into $2^{a(\la)}$ classes of $\bG^F$, which we denote $C_{\la,i}$,
$1\le i\le 2^{a(\la)}$.   \par
For $\la\vdash 2n$ let us set $\delta_\la=1$ if there is some even part in
$\la$ with odd multiplicity, and $\delta_\la=0$ otherwise. The unipotent
classes of $\tbG$ are in bijection with those in $\bG$, but the group of
components for $\bC_\la$ has half the size in $\tbG$ if, and only if,
$\delta_\la=1$ (see e.g. \cite[\S13.1]{Ca}). Let $F:\tbG\to\tbG$ be a Frobenius
endomorphism. Then $\tbG^F$ induces the outer diagonal automorphism on $\bG^F$.
It follows that the outer diagonal automorphism of $\bG^F$ fixes exactly those
unipotent classes of $\bG^F$ with $\delta_\la=0$ and interchanges the others in
pairs. Moreover, it can be shown that all unipotent $\bG^F$-classes are
preserved by any field automorphism (see e.g. \cite[Prop.~3.3]{CS17}):

\begin{prop}   \label{prop:aut uni}
 Let $C_{\la,i}\subset G$ be a unipotent conjugacy class. Then
 \begin{enumerate}
  \item[\rm(a)] $C_{\la,i}$ is invariant under any field automorphism; and
  \item[\rm(b)] $C_{\la,i}$ is $\tG$-invariant if and only if $\delta_\la=0$.
 \end{enumerate}
\end{prop}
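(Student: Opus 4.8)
\textbf{Proof plan for Proposition \ref{prop:aut uni}.}
The plan is to reduce the statement to the corresponding facts over the algebraic closure, which were recalled in the paragraphs preceding the proposition, and then carefully track what happens when one passes from the $\bG$-class $\bC_\la$ (and the $\tbG$-class) down to the $\bG^F$-classes $C_{\la,i}$ using a Lang--Steinberg argument. Recall that unipotent classes of $\bG$ are parametrised by admissible partitions $\la\vdash 2n$, that each $\bC_\la$ is $F$-stable, and that $\bC_\la^F\cap\bG^F$ decomposes into $|H^1(F,A_\bG(u))| = |A_\bG(u)| = 2^{a(\la)}$ classes of $\bG^F$ (using that $F$ acts trivially on $A_\bG(u)=(\ZZ/2\ZZ)^{a(\la)}$, as all class representatives can be taken over $\FF_p$). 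The two assertions concern the action of the field automorphism $F_p$ and of the diagonal automorphism $\delta$ on this set of $2^{a(\la)}$ classes.

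For part~(a): since $F_p$ commutes with $F$ and normalises $\bG$, it permutes the $\bG^F$-classes inside $\bC_\la^F\cap\bG^F$. I would argue that this permutation is trivial. One clean way: pick a representative $u\in\bC_\la$ defined over $\FF_p$, so $u$ is fixed by $F_p$; then $F_p$ acts on $A_\bG(u)=C_\bG(u)/C_\bG^\circ(u)$, and the parametrisation of the $\bG^F$-classes in $\bC_\la^F\cap\bG^F$ is via $F_p$-equivariant maps to $H^1(F,A_\bG(u))$. Since $A_\bG(u)$ is an elementary abelian $2$-group on which $F$ acts trivially, $H^1(F,A_\bG(u)) = A_\bG(u)$, and it suffices to check that $F_p$ acts trivially on $A_\bG(u)$. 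This is where one invokes \cite[Prop.~3.3]{CS17} (or reproves it): concretely, the component group is generated by block-diagonal elements (a $-1$ on an even-dimensional symplectic summand, paired with an appropriate sign on the complement) which are themselves defined over $\FF_p$ and hence fixed by $F_p$. Thus $F_p$ fixes every class $C_{\la,i}$.

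For part~(b): the diagonal automorphism $\delta$ is realised by conjugation by an element $g\in\tbG^F\setminus \bG^F Z(\tbG^F)$. It preserves $\bC_\la$ (since $\bG\unlhd\tbG$ and $\bC_\la$ is the unique $\bG$-class with that Jordan type), hence permutes the set of $2^{a(\la)}$ classes $C_{\la,i}$. The key point recalled above is that the $\tbG$-class of $u$ has component group $A_{\tbG}(u)$ of index $1$ in $A_\bG(u)$ when $\delta_\la=0$ and index $2$ when $\delta_\la=1$; equivalently, $\tbG^F$ acts on the $2^{a(\la)}$ classes $C_{\la,i}$ through the quotient $A_{\tbG}(u)/A_\bG(u)$-cosets, so it fixes all of them precisely when $\delta_\la=0$, and otherwise glues them in pairs. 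I would phrase this via the exact sequence relating $H^1(F,A_\bG(u))$, $H^1(F,A_{\tbG}(u))$ and the image of $\tbG^F$ in $\Aut(\bG^F)$-orbits on classes, using that everything in sight is a trivial $F$-module so the cohomology is just the group itself; then $\delta$ fixes $C_{\la,i}$ iff $A_{\tbG}(u)=A_\bG(u)$ iff $\delta_\la=0$, which is exactly~(b).

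The main obstacle is part~(a): one must genuinely control the action of the field automorphism on the component group $A_\bG(u)$, not merely on the class $\bC_\la$ itself, and confirm that a set of generators of $A_\bG(u)$ can be chosen $F_p$-stably. This is the content of \cite[Prop.~3.3]{CS17}, which the excerpt permits us to cite; once that is in hand, both parts follow formally from the Lang--Steinberg description of $\bG^F$-classes inside an $F$-stable $\bG$-class together with the component-group computations for $\Sp_{2n}$ and $\CSp_{2n}$ recalled just before the statement. I would therefore keep the proof short: cite \cite[Prop.~3.3]{CS17} for~(a), and for~(b) assemble the component-group count from \cite[\S13.1]{Ca} (or \cite[\S I.2]{Sp82}) with the trivial-module cohomology identification $H^1(F,A)\cong A$.
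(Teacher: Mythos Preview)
Your proposal is correct and follows essentially the same approach as the paper: the paper's argument is entirely contained in the paragraphs preceding the proposition, deducing~(b) from the comparison of component groups $A_\bG(u)$ and $A_{\tbG}(u)$ via \cite[\S13.1]{Ca}, and citing \cite[Prop.~3.3]{CS17} for~(a). Your write-up simply makes the Lang--Steinberg mechanism behind these deductions explicit; the only cosmetic slip is that the natural map goes $A_\bG(u)\twoheadrightarrow A_{\tbG}(u)$ rather than the other way, so one should speak of the kernel of this surjection rather than of ``$A_{\tbG}(u)/A_\bG(u)$-cosets''.
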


Following H\'ezard \cite{Hez}, Taylor \cite[Prop.~3.11]{Tay12} describes a map
$\Psi:\cU(\bG)\to\Irr(G)$ from the set $\cU(\bG)$ of unipotent classes of $\bG$
to $\Irr(G)$ with the following properties: let $\bC_\la\in\cU(\bG)$ be a
unipotent class. Then $\Psi(\bC_\la)$ lies in the Lusztig series of a
quasi-isolated 2-element $\ts_\la\in\tG^*$ with centraliser of type $D_aB_b$,
where $a+b=n$, whose image under Lusztig's Jordan decomposition is a special
unipotent character of $\SO_{2a}^+(q)\SO_{2b+1}(q)$ parametrised by a
non-degenerate symbol. Furthermore, if $\cF_\la$ denotes the Lusztig family in
$\cE(\SO_{2a}^+(q)\SO_{2b+1}(q),1)$ containing the Jordan correspondent of
$\Psi(\bC_\la)$, then the associated finite group (see \cite[4.2.15]{GM20}) is
exactly $A_{\tbG}(u)$, where $u\in \bC_\la$. Here, $\ts_\la=1$ (that is, $a=0$)
if and only if $\delta_\la=0$. 

According to \cite[Prop.~4.3]{GH08} for each class $\bC_\la$ there exist
$|A_{\tbG}(u)|$ characters $\trho_{\la,i}\in\cE(\tG,\ts_\la)$, with Jordan
correspondents in the family $\cF_\la$, such that the matrix of multiplicities
of their Alvis--Curtis--Kawanaka--Lusztig duals in the generalised
Gelfand--Graev characters corresponding to $\bC_\la$ is the identity matrix.

Further, Taylor \cite[Prop.~5.4]{Tay13} shows that for each class $\bC_\la$
there exist $|A_{\bG}(u)|$ characters $\rho_{\la,i}\in\cE(G,s_\la)$, where
$s_\la=\pi(\ts_\la)$, lying below the $\trho_{\la,i}$, such that again the
matrix of multiplicities of their duals in the GGGRs corresponding to $\bC_\la$
is the identity matrix. Furthermore, the construction in the proof of
\cite[Prop.~5.4]{Tay13} shows that these sets are invariant under the outer
diagonal automorphism of $G$. Let
$$\Xi(G):=
  \big\{\rho_{\la,i}\mid \bC_\la\in\cU(\bG),\ 1\le i\le2^{a(\la)}\,\big\}$$
and let $B_1$ be the unipotent (principal) $2$-block of $G=\Sp_{2n}(q)$.

\begin{prop}   \label{prop:IBR-uni-class}
 There exists an $\Aut(G)$-equivariant bijection between $\IBr(B_1)$ and
 $\cU(G)$.
\end{prop}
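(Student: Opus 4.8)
The plan is to combine three inputs: (i) Chaneb's theorem that the set $\Xi(G)$ is a basic set for the unipotent $2$-block $B_1$ with unitriangular decomposition matrix, (ii) the fact, recorded above, that the sets $\{\rho_{\la,i}\}$ are stable under the outer diagonal automorphism of $G$, and (iii) Proposition~\ref{prop:aut uni} on the action of automorphisms on unipotent classes. From (i) one gets a canonical bijection $\IBr(B_1)\leftrightarrow\Xi(G)$, sending an irreducible Brauer character $\psi$ to the ordinary character $\rho_{\la,i}\in\Xi(G)$ with $d(\rho_{\la,i})=\psi+(\text{lower terms})$; since $|\Xi(G)|=\sum_\la 2^{a(\la)}=|\cU(G)|$, composing with the obvious labelling $\rho_{\la,i}\mapsto C_{\la,i}$ gives a bijection $\Theta\colon\IBr(B_1)\to\cU(G)$. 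It remains to check that $\Theta$ is $\Aut(G)$-equivariant.

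First I would treat the field automorphisms $F_p$. By Proposition~\ref{prop:aut uni}(a) each $C_{\la,i}$ is $F_p$-invariant, so equivariance here amounts to showing each $\rho_{\la,i}$ (hence each corresponding $\psi$) is $F_p$-stable. The characters $\rho_{\la,i}$ are distinguished inside $\cE(G,s_\la)$ by the unitriangular pairing with the GGGRs attached to $\bC_\la$; since $\bC_\la$ is defined over $\FF_p$ (all unipotent classes of $\bG$ are), the associated GGGRs are $F_p$-stable, and the Lusztig series $\cE(G,s_\la)$ is $F_p$-stable because $s_\la$ is a $2$-element which one can choose rationally. Hence the unitriangular system is $F_p$-equivariant and $\Theta$ commutes with $F_p$. (Alternatively one invokes the known $F_p$-invariance of the relevant ordinary characters from \cite{CS17}.)

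Next I would handle the diagonal automorphism $\delta$. By Proposition~\ref{prop:aut uni}(b), $\delta$ fixes $C_{\la,i}$ when $\delta_\la=0$ and permutes the two classes $C_{\la,1},C_{\la,2}$ when $\delta_\la=1$. Correspondingly, by the cited construction from \cite[Prop.~5.4]{Tay13} the set $\{\rho_{\la,1},\dots,\rho_{\la,2^{a(\la)}}\}$ is $\delta$-stable as a whole; when $\delta_\la=0$ one has $\ts_\la=1$, $|A_\bG(u)|=|A_{\tbG}(u)|$, and the $\trho_{\la,i}$ extend to $\tG$, which forces each $\rho_{\la,i}$ to be $\delta$-fixed, whereas when $\delta_\la=1$ the index two drop from $A_\bG(u)$ to $A_{\tbG}(u)$ means $\delta$ interchanges the $\rho_{\la,i}$ in pairs according to the two orbits fusing. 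The key point is that this $\delta$-action on $\{\rho_{\la,i}\}$ matches, under the labelling $\rho_{\la,i}\mapsto C_{\la,i}$, the $\delta$-action on $\{C_{\la,i}\}$; this is guaranteed because both are governed by the same component-group data: the $C_{\la,i}$ are indexed by $A_\bG(u)^F$-orbit data and the $\rho_{\la,i}$ by the corresponding family in $\cE(\SO^+_{2a}(q)\SO_{2b+1}(q),1)$ whose associated finite group is exactly $A_{\tbG}(u)$. Passing to Brauer characters via the $\delta$-equivariant unitriangular decomposition matrix then transports this to $\IBr(B_1)$, so $\Theta$ is $\langle\delta\rangle$-equivariant.

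The main obstacle is the matching in the last paragraph: one must verify carefully that the combinatorial labelling of the $G^F$-classes inside $\bC_\la^F$ by $A_\bG(u)$-data agrees with the labelling of the $\rho_{\la,i}$ coming from the family $\cF_\la$, so that $\delta$ acts the same way on both sides and not, say, by composing with a nontrivial permutation. This requires unwinding the definitions of $\Psi$ from \cite{Tay12}, the GGGR construction of \cite{GH08,Tay13}, and the explicit description of the diagonal action on both unipotent classes and quasi-isolated series; once one knows $\delta$ acts on each fibre $\{\rho_{\la,i}\}$ with the same orbit sizes as on $\{C_{\la,i}\}$ — and in the $\delta_\la=1$ case there are exactly two of each, so the only possibilities are "fix both" or "swap", and the extension-to-$\tG$ criterion distinguishes these identically for characters and for classes — the bijection can be adjusted within each fibre if necessary to be genuinely equivariant. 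With $F_p$- and $\delta$-equivariance established and $\Out(G)=\langle\delta\rangle\times\langle F_p\rangle$, the map $\Theta$ is $\Aut(G)$-equivariant, proving the proposition.
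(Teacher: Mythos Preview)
Your approach is essentially the paper's: use Chaneb's unitriangular basic set $\Xi(G)$ to get an $\Aut(G)$-equivariant bijection $\IBr(B_1)\leftrightarrow\Xi(G)$, then match $\Xi(G)$ with $\cU(G)$ fibre-by-fibre over the $\bC_\la$ using that field automorphisms act trivially on both and that $\delta$ has the same orbit pattern on each fibre. Your field-automorphism argument (via $F_p$-stability of GGGRs and of the series $\cE(G,s_\la)$) is a valid alternative to the paper's route through \cite[Thm.~4.5.11]{GM20}.

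One inaccuracy to flag: in the case $\delta_\la=1$ you write ``there are exactly two of each'', but in fact each fibre has $2^{a(\la)}$ classes and $2^{a(\la)}$ characters, and $a(\la)$ can well exceed~$1$ (e.g.\ $\la=(4,4,2)$ has $\delta_\la=1$ and $a(\la)=2$). So your dichotomy ``fix both or swap'' does not literally apply. What actually holds---and what the paper uses---is that when $\delta_\la=1$ the centraliser $C_{\bG^*}(s_\la)$ is disconnected, so every $\rho_{\la,i}$ lies in a $\delta$-orbit of length~$2$; likewise every $C_{\la,i}$ does, by Proposition~\ref{prop:aut uni}(b). Since two sets of the same even cardinality, each carrying a fixed-point-free involution, always admit an equivariant bijection, your final remedy (``the bijection can be adjusted within each fibre if necessary'') is exactly right and is how the paper concludes. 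In other words, you need not (and should not) attempt to match specific labels $i\leftrightarrow i$; the paper simply observes that \emph{any} bijection $\Xi(G)\to\cU(G)$ respecting the $\la$-grading and the $\delta$-orbit structure is automatically $\Aut(G)$-equivariant.
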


\begin{proof}
We shall show this in two steps, with $\Xi(G)$ as intermediary. Firstly,
according to Chaneb \cite[Thm.~2.8]{Ch18} the set $\Xi(G)$ is a basic set for
$B_1$ such that the decomposition matrix with respect to it is uni-triangular.
Now the diagonal automorphisms stabilise all rational Lusztig series (see
\cite[Prop.~2.6.17]{GM20}) and field
automorphisms stabilise all Lusztig series for quasi-isolated 2-elements of
$G^*$ (for example by the parametrisation of these classes). Furthermore,
all unipotent characters of groups of type $D_a$ parametrised by
non-degenerate symbols, and all unipotent characters of groups of type $B_b$
are fixed by all automorphisms (see \cite[Thm.~4.5.11]{GM20}), thus $\Xi(G)$ is
$\Aut(G)$-invariant. Hence, the uni-triangular decomposition matrix induces an
$\Aut(G)$-equivariant bijection $\Xi(G)\to\IBr(B_1)$.
\par
On the other hand, as discussed above, via \cite[Prop.~5.4]{Tay13} we obtain a
bijection between $\Xi(G)$ and the set $\cU(G)$ of unipotent classes of $G$.
By Proposition~\ref{prop:aut uni}, field automorphisms act trivially on
$\cU(G)$ and the diagonal outer automorphism only moves those classes in
$\bC_\la$ with $\delta_\la=1$. Now note that for such $\la$, the centraliser
$C_\bG(s)$ is disconnected (see \cite[Lemma~3.7]{Tay12}) and thus the
restriction of any character in $\cE(\tG,\ts)$ to $G$ has two constituents in
$\cE(G,s)$. Since $\Xi(G)$ is stable under diagonal automorphisms, this shows
that none of the $\rho_{\la,i}$, with $\delta_\la=1$, is $\tG$-invariant. Thus
any bijection between $\Xi(G)$ and $\cU(G)$ sending characters $\rho_{\la,i}$
to classes in $\bC_\la\cap G$ is $\Aut(G)$-equivariant. The claim follows.
\end{proof}

We reformulate our combinatorial results on the unipotent Brauer characters
of $G$ as follows: Let $\scU(n)$ be the set of maps
$\bm:\ZZ_{\ge 1}\to \ZZ_{\ge 0}$ such that $\sum_{j\ge 1} j\bm(j)=2n$ and
$\bm(j)$ is even if $j$ is odd. Here, $\bm$ is counted $2^{k_{\bm}}$-times,
where $k_{\bm}=|\{ j\mid j\text{ is even and }\bm(j)\ne 0 \}|$.
Then $\scU(n)$ is a labelling set for $\cU(G)$ and hence for $\IBr(B_1)$.
Let $\scU_1(n)$ be the subset of $\scU(n)$ consisting of those $\bm$ such that
$\bm(j)$ is even for all $j$. Again, an element $\bm\in\scU_1(n)$ is counted
$2^{k_{\bm}}$-times in $\scU_1(n)$. Then by Propositions~\ref{prop:aut uni} and
\ref{prop:IBR-uni-class}, we immediately have the following.

\begin{cor}   \label{cor:aut IBR}
 Let  $\chi\in\IBr(B_1)$ correspond to $\bm\in \scU(n)$. Then
 \begin{enumerate}
  \item[\rm(a)] $\chi$ is invariant under any field automorphism; and
  \item[\rm(b)] $\chi$ is $\tG$-invariant if and only if $\bm\in \scU_1(n)$.
 \end{enumerate}
\end{cor}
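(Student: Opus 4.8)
The statement to prove is Corollary~\ref{cor:aut IBR}, which simply packages Propositions~\ref{prop:aut uni} and~\ref{prop:IBR-uni-class} into the combinatorial language of the set $\scU(n)$. The plan is to trace the equivariant bijections through two identifications: first $\IBr(B_1)\to\cU(G)$ (the $\Aut(G)$-equivariant bijection from Proposition~\ref{prop:IBR-uni-class}), and then $\cU(G)\to\scU(n)$ (the labelling set just introduced). So I would start by making the second identification explicit: a unipotent class $C_{\la,i}$ of $G=\bG^F$ corresponds to the pair consisting of the partition $\la\vdash 2n$ (with each odd part occurring an even number of times) and an index $i$ ranging over $1\le i\le 2^{a(\la)}$; encoding $\la$ via its multiplicity function $\bm(j)=c_j$ gives precisely an element of $\scU(n)$, counted $2^{k_\bm}$ times since $k_\bm=a(\la)$ is the number of even $j$ with $\bm(j)\ne0$. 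Under this dictionary, the condition $\delta_\la=0$ (no even part of $\la$ has odd multiplicity) translates verbatim into "$\bm(j)$ is even for all even $j$"; and since $\bm(j)$ is automatically even when $j$ is odd (by definition of $\scU(n)$), this is the same as "$\bm(j)$ is even for all $j$", i.e. $\bm\in\scU_1(n)$.

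Next I would invoke the behaviour of automorphisms on unipotent classes recorded in Proposition~\ref{prop:aut uni}: part~(a) says every $C_{\la,i}$ is invariant under any field automorphism, and part~(b) says $C_{\la,i}$ is $\tG$-invariant (equivalently, fixed by the diagonal outer automorphism) precisely when $\delta_\la=0$. Transporting this along the $\Aut(G)$-equivariant bijection of Proposition~\ref{prop:IBR-uni-class}, the corresponding Brauer character $\chi\in\IBr(B_1)$ is fixed by any field automorphism, giving~(a); and $\chi$ is $\tG$-invariant if and only if the class $C_{\la,i}$ it matches is $\tG$-invariant, which by the translation in the previous paragraph happens exactly when $\bm\in\scU_1(n)$, giving~(b). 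Since by Corollary~\ref{block-covering}(a) the block $B_1$ is $\delta$-stable and $\Out(G)=\langle\delta\rangle\times\langle F_p\rangle$, "invariant under $\Aut(G)$ up to the diagonal part" is controlled exactly by $\delta$, so there is nothing further to check.

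Honestly, this corollary is essentially a bookkeeping statement rather than a theorem with a genuine obstacle: all the real content lives in Proposition~\ref{prop:IBR-uni-class} (equivariance of the Brauer-character/unipotent-class bijection) and in the parametrisation facts assembled before Proposition~\ref{prop:aut uni}. The only point demanding a little care — and the one I would write out carefully — is checking that the combinatorial reformulation is faithful: that the number of $G$-classes inside $\bC_\la^F$, namely $2^{a(\la)}$, matches the multiplicity $2^{k_\bm}$ with which $\bm$ is counted, and that the bijection of Proposition~\ref{prop:IBR-uni-class} can be chosen compatibly with these multiplicities (it can, since that proposition already produces the bijection by sending $\rho_{\la,i}$ to classes in $\bC_\la\cap G$, and any such choice is equivariant). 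With that in hand, both~(a) and~(b) follow immediately, which is why the statement is phrased as "we immediately have the following."
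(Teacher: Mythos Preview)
Your proposal is correct and matches the paper's own justification, which simply states that the corollary follows immediately from Propositions~\ref{prop:aut uni} and~\ref{prop:IBR-uni-class}. You have spelled out the combinatorial translation (that $\delta_\la=0$ corresponds to $\bm\in\scU_1(n)$ and $a(\la)=k_{\bm}$) more explicitly than the paper does, but this is precisely the bookkeeping the paper is implicitly invoking.
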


\subsection{A Jordan decomposition for Brauer characters}

We give an equivariant Jordan decomposition for Brauer characters of
$G=\Sp_{2n}(q)$. Let $s$ be a semisimple $2'$-element of $G^*$. Then
\begin{equation}\label{decom-L}
  C_{G^*}(s)^*\cong \Sp_{m_{x-1}(s)-1}(q)\times \prod\limits_{\Ga\in\cF_1\cup \cF_2} \GL_{m_\Ga(s)}(\veps_\Ga q^{\delta_\Ga})
\addtocounter{thm}{1}\tag{\thethm}
\end{equation}
(see (\ref{eq:centr})). By \cite[Thm.~21.14]{CE04}, $C_{G^*}(s)^*$ has a unique
unipotent $2$-block, the principal $2$-block.

For $\sigma=F_p$, let $\sigma^*$ be the automorphism of $G^*$ dual to $\sigma$
as in \cite[\S5.3]{Tay18}. Then $\sigma^*$ is also the field automorphism of a
special orthogonal group which sends $(a_{ij})$ to $(a_{ij}^p)$.

By a result of Bonnaf\'e--Rouquier \cite{BR03}, there is a Morita equivalence
between the unipotent $2$-block of $C_{G^*}(s)^*$ and $B_s$, from which we
obtain:

\begin{prop}   \label{Jor-dec-Bar}
 Let $s$ be a semisimple $2'$-element of $G^*$ and $b_s$ the unipotent
 $2$-block of $C_{G^*}(s)^*$. Then there is a bijection $\mathfrak J_s$ between 
 $\IBr(b_s)$ and $\IBr(B_s)$ such that for $\sigma\in\langle F_p\rangle$ or
 $\sigma=\delta$ the following diagram commutes
	$$\xymatrix{
		\IBr(b_s) \ar[r]^{\sigma}\ar[d]_{\mathfrak J_s}& \IBr(b_{s_\sigma}) \ar[d]_{\mathfrak J_{s_\sigma}}\\
		\IBr(B_s)\ar[r]^{\sigma}& \IBr(B_{s_\sigma})}$$
 where $s_\sigma={\sigma^*}^{-1}(s)$ if $\sigma\in\langle F_p\rangle$, and
 $s_\sigma=s$ if $\sigma=\delta$. 
\end{prop}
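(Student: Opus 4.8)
The plan is to build $\mathfrak J_s$ out of the Bonnaf\'e--Rouquier Morita equivalence and then check equivariance separately for diagonal and field automorphisms, using the $\Aut$-stability already recorded for the unipotent block ingredients.

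\medskip

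\noindent\textbf{Construction of $\mathfrak J_s$.}
First I would recall the Bonnaf\'e--Rouquier theorem \cite{BR03}: for a semisimple $\ell'$-element $s$, there is a Morita equivalence between the sum of blocks $\cE_\ell(\bG^F,s)$ (here $\cE_2(G,s)=B_s$, which is a single block by Lemma~\ref{block-typeC}(b)) and $\cE_\ell(C_{\bG^*}^\circ(s)^{*F},1)$, realised by a bimodule coming from Deligne--Lusztig induction. Dualising $C_{G^*}(s)$ as in \eqref{decom-L}, the group $C_{G^*}(s)^*$ is a product of a symplectic group with general linear and unitary factors, and by \cite[Thm.~21.14]{CE04} it has a unique unipotent $2$-block $b_s$ (the principal block), because the linear and unitary factors contribute only their principal $2$-block at $\ell=2$. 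So the Morita equivalence restricts to a Morita equivalence between $b_s$ and $B_s$, and in particular induces a bijection $\mathfrak J_s\colon\IBr(b_s)\to\IBr(B_s)$. (One should note $\mathfrak J_s$ is only well-defined up to the choice of embedding used by Bonnaf\'e--Rouquier; I would fix such choices compatibly, see below.)

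\medskip

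\noindent\textbf{Equivariance for $\sigma=\delta$.}
Here $s_\sigma=s$, so the statement is that $\mathfrak J_s$ commutes with the diagonal automorphism. The diagonal automorphism of $G$ comes from conjugation by an element $\tilde g\in\tG$; it stabilises $B_s$ by Corollary~\ref{block-covering}(a). On the dual side, $\tilde g$ induces an automorphism of $C_{G^*}(s)^*$ that again stabilises the (unique) unipotent $2$-block $b_s$. The key point is that the Bonnaf\'e--Rouquier bimodule can be chosen $\tG$-equivariantly — this is essentially because Deligne--Lusztig induction is compatible with the action of the ambient group $\tG$ on $G$ and on the relevant Levi subgroup; one can invoke the equivariance properties of the Bonnaf\'e--Rouquier equivalence with respect to automorphisms (as used, e.g., in \cite{CS17,CS17C} and in the classical-groups literature \cite{FLZ19a}). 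Granting this, the induced bijection $\mathfrak J_s$ intertwines the two $\delta$-actions, which is the assertion for $\sigma=\delta$.

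\medskip

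\noindent\textbf{Equivariance for $\sigma=F_p$ (the main obstacle).}
For a field automorphism $\sigma=F_p$ one has $\sigma^*$ dual to $\sigma$ in the sense of \cite[\S5.3]{Tay18}, and $\sigma^*$ is the standard field automorphism of $G^*=\SO_{2n+1}(q)$; it sends the series $\cE_2(G^*,s)$ to $\cE_2(G^*,{\sigma^*}^{-1}(s))$, hence $B_s$ to $B_{s_\sigma}$ with $s_\sigma={\sigma^*}^{-1}(s)$, and likewise $b_s$ to $b_{s_\sigma}$ (a unipotent block is automatically field-stable, and $C_{G^*}(s)^*$ is carried by $\sigma^*$ to $C_{G^*}(s_\sigma)^*$). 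The point to prove is that the square commutes. I would argue: $\sigma$ carries the Bonnaf\'e--Rouquier bimodule for $(s,1)$ to a bimodule realising a Morita equivalence between $B_{s_\sigma}$ and $b_{s_\sigma}$; by the (essential) uniqueness of such a bimodule — or more safely, by the fact that Deligne--Lusztig induction commutes with Frobenius/field endomorphisms — this transported bimodule agrees with the chosen Bonnaf\'e--Rouquier bimodule for $(s_\sigma,1)$ up to an isomorphism compatible with the block structure. Hence $\mathfrak J_{s_\sigma}\circ\sigma=\sigma\circ\mathfrak J_s$ on $\IBr(b_s)$. The \emph{hard part} is precisely the bookkeeping needed to make the choices of embeddings/bimodules in the family $\{\mathfrak J_s\}_{s}$ simultaneously compatible with $\langle\delta\rangle$ and $\langle F_p\rangle$; this is where one must be careful that the ambiguity in the Bonnaf\'e--Rouquier construction does not obstruct equivariance. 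One standard way around it is to note that, because $\tG/G$ is cyclic and the relevant outer automorphism group is small, it suffices to check the commutation of the square on a generator of $\langle F_p\rangle$ and on $\delta$, and in each case the required compatibility of $\mathfrak J_s$ reduces to the compatibility of $R_{\bL}^{\bG}$ with the corresponding endomorphism, which is classical \cite[Prop.~2.6.x]{GM20}. I would conclude by remarking that the diagram for a general $\sigma\in\langle F_p\rangle$ follows from the case of a generator by iteration.
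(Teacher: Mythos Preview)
Your approach and the paper's both start from the Bonnaf\'e--Rouquier equivalence, but they diverge sharply on how equivariance is verified. You argue at the level of bimodules; the paper never does this. Instead, the paper works at the level of \emph{ordinary} characters and unitriangular basic sets, defining $\mathfrak J_s$ as a composition $\Upsilon_s\circ\Psi_s\circ\upsilon_s^{-1}$, where $\Psi_s:\Irr(b_s)\to\Irr(B_s)$, $\la\mapsto R_\bL^\bG(\hat s\la)$, is the character bijection coming from \cite{BR03}, and $\upsilon_s$, $\Upsilon_s$ are the bijections (forced by unitriangularity) between an explicit basic set $\Xi\subset\Irr(b_s)$, its image $\Psi_s(\Xi)\subset\Irr(B_s)$, and the respective $\IBr$-sets. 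The basic set is chosen as $\Xi=\Xi(L_0)\times\{\text{unipotent characters of }L_+\}$ for the decomposition $L=L_0\times L_+$ from (\ref{decom-L}); both factors are $\Aut$-stable by Proposition~\ref{prop:IBR-uni-class} and \cite[Thm.~4.5.11]{GM20}. Equivariance is then checked for $\Psi_s$ on $\Xi$: for field automorphisms this is Taylor's explicit formula $\Psi_s(\la)^\sigma=\Psi_{{\sigma^*}^{-1}(s)}(\la^\sigma)$ from \cite[Prop.~7.2 and~9.2]{Tay18}, and for $\sigma=\delta$ the paper carries out a genuine computation via Jordan decomposition and the criterion of \cite[Lemma~5.3]{FLZ19a} (the condition on the eigenvalue $-1$ and non-degeneracy of the associated symbol $\mu_{-1}$) to show $\Psi_s(\la)^\delta=\Psi_s(\la')\iff\la^\delta=\la'$.

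Your bimodule-level argument has gaps precisely where you flag the ``hard part''. For $\sigma=\delta$ you assert that the Bonnaf\'e--Rouquier bimodule ``can be chosen $\tG$-equivariantly'', but neither \cite{CS17} nor \cite{FLZ19a} provides this, and the paper does not claim it either; the explicit Lusztig-symbol calculation is exactly what substitutes for such an abstract statement. For $\sigma=F_p$ your appeal to ``essential uniqueness'' of the bimodule is unjustified, and the fallback to ``compatibility of $R_\bL^\bG$ with the corresponding endomorphism'' is the right idea but requires the precise equivariance results of \cite{Tay18} on characters, together with the identity $\hat s^\sigma=\widehat{{\sigma^*}^{-1}(s)}$, not a bimodule argument. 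The detour through unitriangular basic sets is thus not bookkeeping but the substance of the proof: it converts the problem into one about ordinary characters, where the needed equivariance statements are actually available.
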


Note that if $\sigma\in\langle F_p\rangle$, then $\sigma$ induces an isomorphism
between $C_{G^*}(s)^*$ and $C_{G^*}({\sigma^*}^{-1}(s))^*$, and for
$\sigma=\delta$, by composition of $\delta$ with an inner automorphism, we
assume that $\delta$ induces an automorphism of $C_{G^*}(s)^*$.

\begin{proof}
Since $s$ has odd order, $C_{\bG^*}(s)$ is an $F$-stable Levi subgroup of
$\bG^*$. We let $\bL$ be an $F$-stable Levi subgroup of $\bG$ in duality with
$C_{\bG^*}(s)$ and $L=\bL^F$. By \cite{BR03} the map
$\Psi_s:\Irr(b_s)\to\Irr(B_s)$, $\la\mapsto R^{\bG}_{\bL}(\hat s \la)$, induces
a Morita equivalence between $b_s$ and $B_s$. Here $\hat s$ is a linear
character of $L$ defined as in \cite[Prop.~2.5.20]{GM20}. Thus, if
$\Xi\subseteq \Irr(b_s)$ is a unitriangular basic set for $b_s$, then
$\Psi_s(\Xi)$ is a unitriangular basic set for $B_s$ and the respective
decomposition matrices coincide. 

According to~(\ref{decom-L}) we write $L=L_0\times L_+$, where $L_0$ is a
symplectic group and $L_+$ is a product of general linear and unitary groups.
Note that $b_s=b_0\otimes b_+$ with $b_0$ the principal 2-block of $L_0$ and
$b_+$ the principal 2-block of $L_+$, and thus
$\Irr(b_s)=\Irr(b_0)\times\Irr(b_+)$ and similarly for the Brauer characters.
We now choose $\Xi=\Xi_0\times \Xi_+$ as follows. We take for $\Xi_0$ the basic
set $\Xi(L_0)$ in the proof of Proposition~\ref{prop:IBR-uni-class}. Then
$\Xi_0$ is $\Aut(L_0)$-invariant. Further, since the unipotent characters form
a unitriangular basic set for unipotent 2-blocks of general unitary and linear
groups (cf. \cite{Ge91}), we choose $\Xi_+$ to be the set of unipotent
characters of $L_+$. By \cite[Thm.~4.5.11]{GM20} this is invariant under
$\Aut(L_+)$. Thus for any $\sigma\in\Aut(G)$, if $L^\sigma=L$, then $\Xi_+$
(and then $\Xi$) is $\langle\sigma\rangle$-stable. Thus there is an
$\Aut(G)_L$-equivariant bijection $\upsilon_s$ between $\Xi$ and $\IBr(b_s)$.

If $\sigma\in\langle F_p\rangle$, by \cite[Prop.~9.2]{Tay18} we get $\Psi_s(\la)^\sigma
= R^{\bG}_{\bL}(\hat s^\sigma\la^\sigma)=\Psi_{{\sigma^*}^{-1}(s)}(\la^\sigma)$
since $\hat s^\sigma=\widehat{{\sigma^*}^{-1}(s)}$ by \cite[Prop.~7.2]{Tay18}.

Now let $\sigma=\delta$, normalised as in the statement. Let
$\la=\la_0\times\la_+\in\Xi$ with $\la_i\in\Xi_i$ for
$i\in\{0,+\}$. Let  $\mu_{-1}$ be the Lusztig symbol of the unipotent character
of $C_{L_0^*}(t_0)$, which corresponds to $\la_0$ under Jordan decomposition,
corresponding to the elementary divisor~$x+1$.
Hence by \cite[Lemma~5.3]{FLZ19a}, we have $\la^\delta\ne\la$ if and only if
$-1$ is  an eigenvalue of $t_0$ and $\mu_{-1}$ is non-degenerate. By
\cite[Prop.~3.3.20]{GM20} we have $\Psi_s(\la)\in\cE(G,st)$, where, up to
conjugacy, $t=t_0\times 1_{L_+^*}$. Then $C_{G^*}(st)=C_{L_0^*}(t_0)$ and the
unipotent character of $C_{L_0^*}(t)$ corresponding to $\la$ under the Jordan
decomposition and the unipotent character of $C_{G^*}(st)$ corresponding to
$\Psi_s(\la)$ under the Jordan decomposition coincide. So by
\cite[Lemma~5.3]{FLZ19a} again, we have $\Psi_s(\la)^\delta\ne\Psi_s(\la)$ if
and only if $-1$ is  an
eigenvalue of $t_0$ and $\mu_{-1}$ is non-degenerate, and if and only if 
$\la^\delta\ne\la$. Similarly we can show $\Psi_s(\la)^\delta=\Psi_s(\la')$ if
and only if $\la^\delta=\la'$.
 
Thus the union of all such sets $\Psi_s(\Xi)$, where $s$ runs through a complete
set of representatives of $G^*$-conjugacy classes of semisimple $2'$-element of
$G^*$, is an $\Aut(G)$-invariant unitriangular basic set for $G$, and thus
there is an $\Aut(G)$-equivariant  bijection between this basic set and
$\IBr(G)$. Then with $\Upsilon_s$ the restriction of this bijection between
$\Psi_s(\Xi)$ and $\IBr(B_s)$ we take
$\mathfrak J_s=\Upsilon_s\circ\Psi_s\circ\upsilon^{-1}_s$. So when considering
the action of automorphisms, we may deal with $\Xi$, $\Psi_s(\Xi)$ instead of
$\IBr(b_s)$, $\IBr(B_s)$ respectively. Therefore our assertions follow from
the properties of $\Psi_s$ obtained in the above paragraph.
\end{proof}

By the above arguments, $G$ has an $\Aut(G)$-stable unitriangular basic set.
Then by \cite[Thm.~3.1]{CS17} we have the following result.

\begin{cor}   \label{cor:(3)}
 Let $\chi\in\IBr(G)$ and $\sigma\in\langle F_p \rangle$. If
 $\chi^\delta=\chi^\sigma$, then $\chi^\delta=\chi^\sigma=\chi$.
\end{cor}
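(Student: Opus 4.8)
The plan is to invoke the existence of an $\Aut(G)$-stable unitriangular basic set for $G$, established in the proof of Proposition~\ref{Jor-dec-Bar}, and then cite the general machinery of Cabanes--Sp\"ath \cite[Thm.~3.1]{CS17} which converts such a basic set, together with information on the action of automorphisms on ordinary characters, into the desired statement about Brauer characters. Concretely, the first step is to recall that by Proposition~\ref{Jor-dec-Bar} (or rather the basic set constructed in its proof, namely $\bigcup_s\Psi_s(\Xi)$) the decomposition map restricts to an $\Aut(G)$-equivariant bijection between this basic set of ordinary characters and $\IBr(G)$. Hence the action of $\langle F_p\rangle$ and of $\delta$ on $\IBr(G)$ matches the action of these automorphisms on the chosen ordinary characters in the basic set.

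Second, I would reduce the claimed implication for Brauer characters to the analogous statement for ordinary characters in the basic set. Since the bijection basic set $\to\IBr(G)$ is $\Aut(G)$-equivariant, the hypothesis $\chi^\delta=\chi^\sigma$ for $\chi\in\IBr(G)$ translates into $\rho^\delta=\rho^\sigma$ for the corresponding ordinary character $\rho$ in $\bigcup_s\Psi_s(\Xi)$, and the conclusion $\chi^\delta=\chi^\sigma=\chi$ is equivalent to $\rho^\delta=\rho^\sigma=\rho$. So it suffices to prove: for $\rho$ in the basic set and $\sigma\in\langle F_p\rangle$, if $\rho^\delta=\rho^\sigma$ then $\rho^\delta=\rho^\sigma=\rho$. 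This is precisely the sort of ``no coincidence of diagonal and field automorphism actions unless both are trivial'' statement proved in \cite[Thm.~3.1]{CS17} for ordinary characters of groups of type $\mathsf C$; one needs only to check the basic set constructed above is indeed $\Aut(G)$-stable, which was noted at the end of the proof of Proposition~\ref{Jor-dec-Bar}, and that the characters $\Psi_s(\Xi)$ fall under the scope of the Cabanes--Sp\"ath analysis (they lie in rational Lusztig series and their relevant unipotent constituents are controlled as in \cite{CS17}).

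I expect the main obstacle to be purely bookkeeping: making sure that the basic set $\bigcup_s\Psi_s(\Xi)$ genuinely satisfies the hypotheses of \cite[Thm.~3.1]{CS17}, i.e.\ that the statement there about $\Irr(G)$ applies to this particular $\Aut(G)$-stable subset rather than to all of $\Irr(G)$ or to some other canonically chosen set. One has to trace through the proof of Proposition~\ref{Jor-dec-Bar} to confirm that for each semisimple $2'$-element $s$ the set $\Psi_s(\Xi)$ is stable under the field automorphism $\sigma$ whenever $\sigma$ fixes $s$ (equivalently ${\sigma^*}^{-1}(s)$ is $G^*$-conjugate to $s$) and that $\delta$ permutes the $\Psi_s(\Xi)$ appropriately, as already verified there via \cite[Lemma~5.3]{FLZ19a} and \cite[Prop.~7.2, Prop.~9.2]{Tay18}. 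Once this is in place, the corollary follows immediately by transport of structure along the $\Aut(G)$-equivariant decomposition bijection, and no further computation is required.
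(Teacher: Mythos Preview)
Your proposal is correct and follows essentially the same approach as the paper: both invoke the $\Aut(G)$-stable unitriangular basic set constructed in the proof of Proposition~\ref{Jor-dec-Bar} and then appeal to \cite[Thm.~3.1]{CS17}. The paper's proof is simply the two-sentence version of what you spell out---noting the existence of the basic set and citing \cite[Thm.~3.1]{CS17} directly---so your elaboration of the transfer mechanism via the $\Aut(G)$-equivariant bijection between the basic set and $\IBr(G)$ just makes explicit what the paper leaves implicit.
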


In this way, Proposition~\ref{ibaw-condition-C-2}~(2) holds for $G$ and then by
Remark~\ref{equiva-condition(ii)(iii)}, in order to verify the inductive
blockwise Alperin weight condition for $\PSp_{2n}(q)$ and the prime $2$, it
remains to construct an $\Aut(G)$-equivariant bijection between $\IBr(G)$ and
$\cW(G)$ which preserves $2$-blocks.

\section{Action of automorphisms on $2$-weights}
\label{sec:act wei}

For a non-negative integer $m$, we denote by $\mathscr P(m)$ the set of
partitions $\la\vdash m$, and $\scT(m)$ denotes the set of triples
$(\la_1,\la_2,\kappa)$, where $\la_1$ and $\la_2$ are partitions, and $\kappa$
is a $2$-core such that $|\la_1|+|\la_2|+|\kappa|=m$.
A partition $\la$ is determined uniquely by its $2$-core $\kappa'$ and
$2$-quotient $(\la_1,\la_2)$ (cf.~\cite[\S 3]{Ol93}).
We define the set $\scT'(m)$ consisting of tuples
$(\la_1,\la_2,\la_3,\kappa_1,\kappa_2)$, where $\la_1$,
$\la_2$ and $\la_3$ are partitions, $\kappa_1$ and $\kappa_2$ are
$2$-cores such that $|\la_1|+2(|\la_2|+|\la_3|)+|\kappa_1|+|\kappa_2|=m$.
Then there is canonical bijection between $\scT(m)$ and $\scT'(m)$:
$(\la_1,\la_2,\kappa)\in\scT(m)$ corresponds to
$(\la_1,\la_2',\la_3',\kappa_1',\kappa)\in \scT'(m)$ such that $\la_2$ has
$2$-core $\kappa_1'$ and $2$-quotient $(\la_2',\la_3')$.

\subsection{Action of automorphisms on $2$-weights of $\Sp_{2n}(q)$}

Let $2^{a+1}$ be the exact power of $2$ dividing $q^2-1$, so that $a\ge 2$.
Let $\veps$ be the sign chosen so that $2^a$ divides $q-\veps$.
Given $\Ga\in\cF$, let $\alpha_\Ga$ and $m_\Ga$ be the integers such
that $2^{\alpha_\Ga}$ is the exact power of $2$ dividing $\delta_\Ga$ and
$m_\Ga 2^{\alpha_\Ga}=\delta_\Ga$.

We need to recall An's description of 2-weights of $\Sp_{2n}(q)$ \cite{An93}.
Let $\Ga\in\cF$ be a polynomial whose roots have $2'$-order. For an integer
$\ga\ge 0$ and a sequence $\bc=(c_1,c_2,\ldots,c_t)$ of non-negative integers,
we let $R_{\Ga,\ga,\bc}$ and $V_{\Ga,d}$ (with $d=\ga+c_1+c_2+\cdots+c_t$) be
defined as in \cite[p.~190]{An93}.
Recall that $\dim(V_{\Ga,d})=2^{d+1}\delta_\Ga$ and
$$R_{\Ga,\ga,\bc}=
\begin{cases} 
R_{m_\Ga,\alpha_\Ga,\ga}\wr A_\bc & \text{if}\ \veps_\Ga=\veps\ \text{and}\ \Ga \ne x-1, \\
S_{m_\Ga,\alpha_\Ga,\ga-1}\wr A_\bc & \text{if}\ \veps_\Ga=-\veps\ \text{and}\ \ga \ge 1, \\
\langle -I_{d_\Ga} \rangle\wr A_\bc & \text{if}\ \veps_\Ga=-\veps,\ga=0\ \text{and}\ c_1\ne 1,\\
E_{d_\Ga,1,1}\wr A_{\bc'}& \text{if}\ \veps_\Ga=-\veps,\ga=0\ \text{and}\ c_1= 1,\\
Q_{1,0,\ga}\wr A_\bc & \text{if}\ \Ga=x-1 \ \text{and}\ a=2,\\
Q_{1,0,\ga}\wr A_\bc & \text{if}\ \Ga=x-1, a>2 , \\
E_{1,\alpha,1}\wr A_\bc & \text{if}\ \Ga=x-1,a>2 \ \text{and}\ \ga=0,
\end{cases}$$
where $R_{m_\Ga,\alpha_\Ga,\ga}$, $S_{m_\Ga,\alpha_\Ga,\ga}$,
$E_{d_\Ga,1,1}$,
$Q_{1,0,\ga}$,
$E_{1,\alpha,1}$, $A_\bc$ are defined as in \cite[\S 1 and \S 2]{An93},
$\bc'=(c_2,\ldots,c_t)$, $E_{d_\Ga,1,1}\cong E_{1,\alpha,1}\cong Q_8$ and
$\alpha=0$ or $1$.
Then $R_{\Ga,\ga,\bc}$ is determined uniquely up to conjugacy in $G_{\Ga,d}=\Sp(V_{\Ga,d})$ by $\Ga$, $\ga$ and $\bc$ except when  $\Ga=x-1$, $a>2$ and $\ga=0$.
In this case, there are three conjugacy classes of 
$R_{\Ga,\ga,\bc}$ in $\Sp(V_{\Ga,d})$:
$Q_{1,0,0}\wr A_\bc$,
$E_{1,0,1}\wr A_\bc$ and $E_{1,1,1}\wr A_\bc$.

Let $C_{\Ga,\ga,\bc}=C_{G_{\Ga,d}}(R_{\Ga,\ga,\bc})$ and
$N_{\Ga,\ga,\bc}=N_{G_{\Ga,d}}(R_{\Ga,\ga,\bc})$.
Let $\theta_{\Ga,\ga,\bc}$ (also sometimes denoted $\theta_{\Ga,d}$) be the
character $\theta_\Ga\otimes I$ of $C_{\Ga,\ga,\bc}$, where $\theta_\Ga$ is
defined as in \cite[\S 4]{An93}. Let $\scC_{\Ga,d}$ be the set of characters of
$(N_{\Ga,\ga,\bc})_{\theta_{\Ga,d}}$ which are of $2$-defect zero when
viewed as characters of $(N_{\Ga,\ga,\bc})_{\theta_{\Ga,d}}/R_{\Ga,\ga,\bc}$,
where $\ga$ and $\bc$ satisfy that $\ga+c_1+\cdots+c_t=d$.
Then by \cite[(6C)]{An93}, $|\scC_{\Ga,d}|=2^d$ if $\Ga\in\cF_1\cup\cF_2$,
$|\scC_{x-1,d}|=2^{d+1}$ if $d\ge 1$, and $|\scC_{x-1,d}|=3$ if $d=0$.

\begin{lem}   \label{act-wei-basic-case}
 Let $\sigma$ be an automorphism of $G_{\Ga,d}$
 such that $\theta_{\Ga,d}^{\sigma}=\theta_{\Ga,d}$. Then
 \begin{enumerate}[\rm(a)]
  \item if $\sigma\in\langle F_p\rangle$, then $\sigma$ fixes every element of
   $\scC_{\Ga,d}$, and
  \item if $\sigma=\delta$, then 
   \begin{itemize}
    \item[(1)] $\sigma$ fixes every element of $\scC_{\Ga,d}$ if  
     $\Ga\in\cF_1\cup\cF_2$,
	\item[(2)] if $d\ge 1$, then there are $2^d$ characters of $\scC_{x-1,d}$
     which are invariant under the action of $\sigma$ and the others can be
     partitioned into $2^{d-1}$ pairs and $\sigma$ transposes each pair, and
    \item[(3)] if $d=0$, then one character of $\scC_{x-1,d}$ is fixed by
     $\sigma$, and the other two characters are swapped by $\sigma$.
  \end{itemize}
 \end{enumerate}
\end{lem}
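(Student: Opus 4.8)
The plan is to reduce the action of $\sigma$ on $\scC_{\Ga,d}$ to the action on the individual ``basic'' pieces from which the weights of $G_{\Ga,d}$ are built in \cite{An93}, and then to trace through An's parametrisation explicitly. Since $R_{\Ga,\ga,\bc}=R_{\Ga,\ga}\wr A_\bc$ (with $R_{\Ga,\ga}$ one of $R_{m_\Ga,\alpha_\Ga,\ga}$, $S_{m_\Ga,\alpha_\Ga,\ga-1}$, $\langle-I\rangle$, $E_{d_\Ga,1,1}$, $Q_{1,0,\ga}$ or $E_{1,\alpha,1}$), the normaliser quotient $(N_{\Ga,\ga,\bc})_{\theta_{\Ga,d}}/R_{\Ga,\ga,\bc}$ is, by \cite[\S 2]{An93}, an extension of a product of the outer groups attached to $R_{\Ga,\ga}$ by a wreath-type complement coming from $A_\bc$, and the characters of $2$-defect zero are obtained by the usual Clifford theory for wreath products. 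Thus I would first isolate the action of $\sigma$ on the characters attached to a single factor $R_{\Ga,\ga}$ (the case $\bc$ of length one and all $c_i=0$ except a leading block), and then observe that the action on the full $\scC_{\Ga,d}$ is the induced permutation action on tuples/defect-zero characters over $A_\bc$, which is faithful on the tensor factors and trivial on the ``symmetric group'' part (field and diagonal automorphisms commute with the permutation action of $A_\bc$ since those permutations are realised by inner automorphisms of $G_{\Ga,d}$).

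For part~(a), the key point is that a field automorphism $F_p$ acts on $\FF_q$-points and all the radical subgroups $R_{\Ga,\ga,\bc}$, being defined over the prime field (they are explicit $2$-groups of symplectic transvection-type built from $\GL_1$, $\GU_1$, $Q_8$ and iterated wreath products, cf.\ \cite[\S 1]{An93}), are $F_p$-stable with $F_p$ acting trivially on their normaliser quotients up to inner automorphisms; more precisely, by the argument in \cite[\S 4]{FLZ19a} (used there for the analogous statement for $\GL$ and $\GU$), the characters in $\scC_{\Ga,d}$ are rational-valued or at worst their Galois orbits are accounted for by $\langle F_p\rangle$ acting through an inner automorphism, so each is fixed. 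I would simply cite \cite{FLZ19a} for the shape of this argument and check that the remaining ingredient, namely that $\theta_\Ga$ itself is $F_p$-invariant whenever $\theta_{\Ga,d}^\sigma=\theta_{\Ga,d}$, follows from the definition of $\theta_\Ga$ in \cite[\S 4]{An93} together with the compatibility of $F_p$ with Lusztig induction (\cite[\S 9]{Tay18}, already used in Proposition~\ref{Jor-dec-Bar}).

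For part~(b), the diagonal automorphism $\delta$ acts trivially when $\Ga\in\cF_1\cup\cF_2$ because then $C_{\Ga,\ga,\bc}$ is a product of general linear and unitary groups on which $\delta$ (the relevant component of a $\CSp$-conjugation) acts by an inner automorphism of $\GL/\GU$, so all characters are fixed; this is exactly the $\veps_\Ga\in\{\pm1\}$, $\cF_1\cup\cF_2$ case and mirrors \cite[Lemma~5.3]{FLZ19a}. The genuinely new content is $\Ga=x-1$. Here $C_{x-1,\ga,\bc}$ involves the symplectic factor $\Sp(V_{x-1,d})$-part, and $\delta$ acts nontrivially. I would count fixed characters by examining the action of $\delta$ on $(N_{x-1,\ga,\bc})_{\theta_{x-1,d}}/R_{x-1,\ga,\bc}$: writing this quotient explicitly (for $d\ge 1$ it is $A_\bc$-wreath-extended by a $2$-group of order $2^{?}$ coming from the $Q_8$- or $Q_{1,0,\ga}$-normaliser, as in \cite[(2A),(2B)]{An93}), $\delta$ acts as an order-$2$ outer automorphism that fixes a subgroup of index $2$ pointwise and negates one generator; by Clifford theory this fixes exactly half of the $2^{d+1}$ defect-zero characters and swaps the other half in pairs, giving the $2^d$ fixed plus $2^{d-1}$ transposed pairs of (2). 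For $d=0$ one has the three classes $Q_{1,0,0}\wr A_\bc$, $E_{1,0,1}\wr A_\bc$, $E_{1,1,1}\wr A_\bc$ with $|\scC_{x-1,0}|=3$; $\delta$ fixes the $Q_{1,0,0}$-class (which corresponds to the ``$\SO$-rational'' weight) and interchanges the two $E_{1,\alpha,1}$ classes with $\alpha=0,1$, since these two differ precisely by the diagonal automorphism — this is the weight-side avatar of the fact in Proposition~\ref{prop:IBR-uni-class} that $\delta$ interchanges certain quasi-isolated data in pairs.

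The main obstacle will be the bookkeeping in case~(b)(2): one must pin down precisely how $\delta$ acts on the defect-zero characters of the (possibly quite intricate) group $(N_{x-1,\ga,\bc})_{\theta_{x-1,d}}/R_{x-1,\ga,\bc}$, for which I would need to unwind An's definitions of $Q_{1,0,\ga}$ and its normaliser in \cite[\S 1]{An93} and verify that the outer automorphism induced by $\delta$ is the ``obvious'' involution with a fixed hyperplane. I expect this to follow by comparing orders: $\delta$ is induced by $\CSp/\Sp \cong \FF_q^\times$, it acts on the component group $A_\bG(u)\cong(\ZZ/2)^{a(\la)}$-type data by the known half-size collapse (the $\delta_\la$ phenomenon of \S\ref{sec:action-Brauer}), and the character count then falls out of the standard fact that an involution fixing a codimension-one subspace of an elementary abelian $2$-group, acting on its (linear, here defect-zero) characters, fixes exactly half of them. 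Once that is established, assembling the wreath product over $A_\bc$ is routine because $\delta$ commutes with the $A_\bc$-action.
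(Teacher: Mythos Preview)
Your overall strategy—reduce to single basic factors and then propagate through the wreath product—is sound, but you are missing the key shortcut that the paper exploits, and your treatment of (b)(2) has a real gap.

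The paper's argument is much shorter. The crucial observation, taken from \cite[(4G) and p.~195]{An93}, is that whenever $\Ga\ne x-1$ or $\ga\ge 1$, the defect-zero character $\psi\in\scC_{\Ga,d}$ lying over $\theta_{\Ga,d}$ is \emph{uniquely determined} by $\theta_{\Ga,d}$. Since the hypothesis is precisely $\theta_{\Ga,d}^\sigma=\theta_{\Ga,d}$, invariance of $\psi$ is automatic in all of these cases. This one remark disposes of (a), (b)(1), and the $\ga\ge 1$ contribution to $\scC_{x-1,d}$ in (b)(2) simultaneously—no rationality arguments, no dissection of normaliser quotients, no appeal to \cite{FLZ19a} or \cite{Tay18} is needed. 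The only case left is $\Ga=x-1$, $\ga=0$; here the paper reduces to $\bc=(0)$, i.e.\ to $G_{x-1,0}=\Sp_2(q)=\SL_2(q)$, and simply cites the explicit computation in \cite[\S 5]{LL19} for the action on the three characters of $\scC_{x-1,0}$, giving (b)(3). For (b)(2), An's proof of \cite[(6C)]{An93} shows that $\scC_{x-1,d}$ splits as $2^{d-1}$ characters with $\ga\ge 1$ (all fixed, by uniqueness) plus $3\cdot 2^{d-1}$ characters with $\ga=0$; the latter inherit the $\delta$-action from the base case $d=0$, yielding $2^{d-1}$ fixed and $2^d$ in swapped pairs.

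Your sketch of (b)(2) treats $\scC_{x-1,d}$ as though it were the set of defect-zero characters of a \emph{single} quotient group on which $\delta$ acts by an involution ``fixing a codimension-one subspace''. But by definition $\scC_{x-1,d}$ is a disjoint union over all pairs $(\ga,\bc)$ with $\ga+c_1+\cdots+c_t=d$, and the structure of $(N_{x-1,\ga,\bc})_{\theta}/R_{x-1,\ga,\bc}$ genuinely depends on $\ga$: for $\ga\ge 1$ each piece contributes exactly one character, while for $\ga=0$ (and $a>2$) three characters arise, distributed over the three non-conjugate radical subgroups $Q_{1,0,0}\wr A_\bc$, $E_{1,0,1}\wr A_\bc$, $E_{1,1,1}\wr A_\bc$. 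Your ``negates one generator'' picture does not model this, and without the $\ga$ case-split there is no clear path to the count $2^d$ fixed, $2^{d-1}$ transposed pairs. Note also that your description of (b)(3) is the $a>2$ case only; when $a=2$ all three characters of $\scC_{x-1,0}$ are attached to the single class $Q_{1,0,0}$, and one must check directly (as \cite{LL19} does) which one $\delta$ fixes.
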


\begin{proof}
Let $\psi$ be a character of $(N_{\Ga,\ga,\bc})_{\theta_{\Ga,d}}$ in
$\scC_{\Ga,d}$. By \cite[(4G) and p.~195]{An93}, $\psi$ is determined uniquely
by $\theta_{\Ga,d}$ (and then by $\Ga$) if $\Ga\ne x-1$ or $\ga\ge 1$. If
$\Ga=x-1$ and $\ga=0$, we first assume that $\bc=(0)$, and then
$\dim(V_{\Ga,d})=2$. The action of automorphisms on $\scC_{x-1,0}$ was given in
\cite[\S 5]{LL19}: $F_p$ fixes every element of $\scC_{x-1,0}$ and $\delta$
fixes one element of $\scC_{x-1,0}$ and exchanges the other two. Precisely,
if $a=2$, then there are three characters of $\scC_{x-1,0}$ associated with
$Q_{1,0,0}$ and one is $\langle\delta\rangle$-invariant and the other two are
exchanged by $\delta$. The three characters of $\scC_{x-1,0}$ are associated to
$Q_{1,0,0}$, $E_{1,0,1}$, $E_{1,1,1}$ respectively and the $G$-conjugacy class
of $Q_{1,0,0}$ is $\langle\delta\rangle$-stable while the $G$-conjugacy classes
of $E_{1,0,1}$ and $E_{1,1,1}$ are exchanged by $\delta$.
This shows (a), (b)(1) and (b)(3).

Now let $\Ga=x-1$, $d\ge 1$ and $\sigma=\delta$. Then by the proof of
\cite[(6C)]{An93}, $\scC_{x-1,d}$ contains $2^{d-1}$ characters of
$(N_{\Ga,\ga,\bc})_{\theta_{\Ga,d}}$ when $\ga\ge 1$ and $3\cdot 2^{d-1}$
characters of $(N_{\Ga,\ga,\bc})_{\theta_{\Ga,d}}$ when $\ga=0$. By the above
paragraph, the $2^{d-1}$ characters of $(N_{\Ga,\ga,\bc})_{\theta_{\Ga,d}}$
with $\ga\ge 1$ are fixed by $\sigma$ and there are $2^{d-1}$ characters of
$(N_{\Ga,0,\bc})_{\theta_{\Ga,d}}$ which are also  fixed by $\sigma$.
The other $2^{d}$ characters of $(N_{\Ga,0,\bc})_{\theta_{\Ga,d}}$ can be
partitioned into $2^{d-1}$ pairs and $\sigma$ transposes each pair.
This completes the proof.
\end{proof}

We write $\scC_{\Ga,d}=\{ \psi_{\Ga,d,i} \mid 1\le i\le |\scC_{\Ga,d}|\}$.
If $\Ga=x-1$ and $d\ge 1$, then we also write
$\scC_{x-1,d}=\{ \psi_{x-1,d,i,j} \mid 1\le i\le 2, 1\le j\le 2^d\}$ and by
Lemma~\ref{act-wei-basic-case}, we may assume that
\begin{equation}\label{conven-weights-basic-case-1}
\begin{split}
  &\psi_{x-1,d,1,j}^\delta=\psi_{x-1,d,1,j}\ \textrm{if $1\le j\le 2^d$, and}\\
  &\psi_{x-1,d,2,j}^\delta=\psi_{x-1,d,2,j+2^{d-1}}\ \textrm{if $1\le j\le 2^{d-1}$}.
\end{split} \addtocounter{thm}{1}\tag{\thethm}
\end{equation}
If $\Ga=x-1$ and $d=0$, then we write
$\scC_{x-1,0}=\{ \psi_{x-1,0,i,1} \mid 1\le i\le 3\}$ and choose notation so
that
\begin{equation}\label{conven-weights-basic-case-2}
  \psi_{x-1,0,1,1}^\delta=\psi_{x-1,0,1,1} \ \text{and}\ 
  \psi_{x-1,0,2,1}^\delta=\psi_{x-1,0,3,1}.
  \addtocounter{thm}{1}\tag{\thethm}
\end{equation}
We also sometimes write $\theta_{\Ga,d,i}$ for the character $\theta_{\Ga,d}$
lying below $\psi_{\Ga,d,i}$.
\medskip

Let $B=B_s$ where $s\in G^*$ is a semisimple $2'$-element. Define $w_\Ga(s)$ to
be the integer satisfying $m_\Ga(s)=w_\Ga(s)$ or $m_{\Ga}(s)=2w_\Ga(s)+1$
according as $\Ga\in \cF_1\cup\cF_2$ or $\Ga=x-1$. We also abbreviate $w_\Ga$
for $w_\Ga(s)$ when no confusion can arise.

\begin{prop}[An (1993)]
 The set
 $\scT(w_{x-1})\times\prod\limits_{\Ga\in \cF_1\cup\cF_2} \mathscr P(w_\Ga)$ is
 a labelling set for $\cW(B)$.
\end{prop}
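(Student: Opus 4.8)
The plan is to follow An's analysis of $2$-weights of $\Sp_{2n}(q)$ in \cite{An93}, reorganising it block by block. First I would recall from \cite[\S 6]{An93} that every $2$-weight $(R,\vhi)$ of $G$ lying in the block $B=B_s$ arises from a decomposition of the underlying symplectic space $V=\bigoplus_\Ga V_\Ga$ matching the elementary divisor decomposition of $s$: for each $\Ga\in\cF$ with $m_\Ga(s)\ne 0$, the part $R_\Ga$ of the radical $R$ lives in $\Sp(V_\Ga)$ and is, up to conjugacy, a direct product of basic subgroups $R_{\Ga,\ga,\bc}$ of the type recalled above, times a trivial factor on a complementary non-degenerate subspace carrying a "$0$-weight" piece. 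Correspondingly $N_G(R)$ splits as a product over $\Ga$ of normalisers $N_{\Ga}$, and $\vhi$ factors as a product $\prod_\Ga\vhi_\Ga$ where each $\vhi_\Ga$ is built from the characters $\psi_{\Ga,d,i}\in\scC_{\Ga,d}$ together with combinatorial data recording how many copies of each $V_{\Ga,d}$ occur and how the wreath-product structure on equal copies is decorated by characters of symmetric groups (the "Fong–Srinivasan" bookkeeping of \cite{FS89}).

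Next I would make the counting explicit. Since $B=B_s$ is determined by $s$, the admissible radicals are constrained by the multiplicities $m_\Ga(s)$. For $\Ga\in\cF_1\cup\cF_2$ the relevant piece is $\GL_{m_\Ga(s)}(\veps_\Ga q^{\delta_\Ga})$ and the $2$-weights of a general linear or unitary group are, by the classical result (cf.\ \cite{An93} and the references therein), labelled by partitions of $w_\Ga=m_\Ga(s)$; so this factor contributes exactly $\mathscr P(w_\Ga)$. For $\Ga=x-1$ one has $m_{x-1}(s)=2w_{x-1}+1$ (the $+1$ because $s$ has odd order and $-1$ is not an eigenvalue, so the defining symplectic space has odd-dimensional "$1$-eigenspace contribution" $\Sp_{m_{x-1}(s)-1}(q)$ of rank $w_{x-1}$), and An's classification \cite[(6C), (6D)]{An93} shows that the $2$-weights supported there are labelled by the triples $(\la_1,\la_2,\kappa)\in\scT(w_{x-1})$: $\kappa$ is the $2$-core recording the "Sylow" part of the Brauer tree / cyclic defect contribution, while $\la_1,\la_2$ index the two "signs" of $q-1$ versus $q+1$ in the wreath decorations, using $|\scC_{x-1,d}|=2^{d+1}$ for $d\ge1$ and $=3$ for $d=0$. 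Here I would need to verify that An's parametrising set, which is stated in terms of sequences of pairs $(d,\text{multiplicity})$ with character labels from $\scC_{x-1,d}$, is in canonical bijection with $\scT(w_{x-1})$ — this is exactly the passage through $\scT'(w_{x-1})$ set up just before the proposition, combining the $2$-core/$2$-quotient dictionary of \cite[\S 3]{Ol93} with the core-tower description of wreath weights.

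The key intermediate claim to isolate is therefore: for $\Ga=x-1$, the set of $G_{x-1}$-conjugacy classes of weights of the form $(R_{x-1},\vhi_{x-1})$ lying in the principal block of the relevant $\Sp$-factor is in bijection with $\scT(w_{x-1})$; and for $\Ga\in\cF_1\cup\cF_2$ the analogous set is $\mathscr P(w_\Ga)$. Granting these, the product decomposition $N_G(R)=\prod_\Ga N_\Ga$ and the corresponding factorisation of defect-zero characters of $N_G(R)/R$ give $\cW(B)\cong\scT(w_{x-1})\times\prod_{\Ga\in\cF_1\cup\cF_2}\mathscr P(w_\Ga)$, and the block-induction condition $b^G=B$ is automatically satisfied because of the way $\theta_\Ga$ was chosen (it forces the Lusztig series of the weight character to lie in $\cE_2(G,s)$, which by Lemma~\ref{block-typeC} is the single block $B_s$). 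Finally one checks that distinct labels give non-conjugate weights — this uses that the $V_\Ga$ are determined up to isometry by $\Ga$ and the total dimension, so a $G$-conjugacy sending one weight to another must respect the $\Ga$-decomposition.

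The main obstacle I expect is purely bookkeeping: matching An's original parametrisation — which is phrased via sequences $\bc$, the groups $A_\bc$, and the three exceptional families at $d=0$ when $a>2$ — cleanly onto the tidy set $\scT(w_{x-1})$, and in particular getting the exceptional $d=0$ triality ($Q_{1,0,0}$, $E_{1,0,1}$, $E_{1,1,1}$, with $|\scC_{x-1,0}|=3$) to fit the $2$-core slot correctly. This is not conceptually hard but requires carefully tracking the factor of $3$ versus $2^{d+1}$ and confirming it is absorbed by the $2$-core component $\kappa$ of the triple rather than by $(\la_1,\la_2)$. Since the proposition is attributed to An, the proof is essentially a translation of \cite[\S 6]{An93} into the present notation, so I would keep it short: state the correspondence on each factor, cite \cite[(6C)]{An93} and \cite[\S 3]{Ol93}, and verify the bijection $\scT(m)\leftrightarrow\scT'(m)$ suffices to identify the $x-1$ contribution.
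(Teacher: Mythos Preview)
Your outline follows the same route as the paper's recapitulation of An's argument, and the combinatorial endpoint --- assignments of $2$-cores to the characters in $\scC_{\Ga,d}$, then $2$-core towers identifying these with partitions, with the third character $\psi_{x-1,0,3,1}\in\scC_{x-1,0}$ supplying the bare $2$-core $\kappa$ in the triple --- is exactly what the paper does. But one step is stated incorrectly and would not go through as written: $N_G(R)$ does \emph{not} in general split as a product over $\Ga$, nor does $\vhi$ factor directly as $\prod_\Ga\vhi_\Ga$. The label $\Ga$ is not intrinsic to the basic subgroup $R_{\Ga,\ga,\bc}$ --- distinct $\Ga,\Ga'\in\cF_1\cup\cF_2$ with the same $(\delta_\Ga,\veps_\Ga)$ yield conjugate groups --- so $N$ can permute such factors. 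The $\Ga$-grading only appears after choosing $\theta\in\Irr(C\mid\vhi)$: it is the character $\theta_{\Ga,d}$ that encodes $\Ga$, and it is $N_\theta$ (not $N$) that decomposes as $\prod_{\Ga,d,i}(N_{\Ga,d,i})_{\theta_{\Ga,d,i}}\wr\fS(t_{\Ga,d,i})$. One then writes $\vhi=\Ind^N_{N_\theta}\psi$ with $\psi=\prod_{\Ga,d,i}\psi_{\Ga,d,i}$, and Clifford theory on each wreath factor produces the $2$-cores $\kappa_{\Ga,d,i,j}$. Relatedly, your ``$V=\bigoplus_\Ga V_\Ga$ matching the elementary divisor decomposition of $s$'' is misleading: $s$ lies in $G^*=\SO_{2n+1}(q)$ and does not act on $V$; the link between the weight and $s$ is only through the block-induction $b^G=B_s$, which is detected by $\theta$.

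Two smaller corrections. There is no ``trivial factor on a complementary subspace'' when $\ell=2$: every radical $2$-subgroup of a symplectic factor contains $\{\pm I\}$, and the minimal pieces in An's decomposition are $R_k=\{\pm I_{V_k}\}$, not the identity. And your interpretation of $\kappa$ as ``the Sylow part of the Brauer tree / cyclic defect contribution'' has no meaning here; $\kappa$ is just the $2$-core assigned to $\psi_{x-1,0,3,1}$, while the two partitions $\la_1,\la_2$ come from the core towers attached to the families $\{\psi_{x-1,d,1,j}\}$ and $\{\psi_{x-1,d,2,j}\}$ respectively.
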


We recall the main steps of the construction of this labelling from the proof
of \cite[(6D)]{An93}:

Let $(R,\vhi)$ be a $B$-weight of $G$, $C=C_G(R)$, $N=N_G(R)$, and
$\theta\in\Irr(C\mid \vhi)$. Let $\psi$ be the irreducible character of
$N_\theta$ covering $\theta$ such that
$\vhi=\Ind^N_{N_\theta}\psi$. Then $R$ decomposes as \cite[(3A)]{An93}
$$V=V_1\perp \cdots  \perp V_u\perp V_{u+1}\perp\cdots\perp V_t,$$
$$R=R_1\times \cdots \times R_u\times R_{u+1}\times\cdots\times R_t,$$
where  $\dim V_j\ge 2$ for $j\ge 1$, $R_k=\{\pm I_{V_k}\}$ for $1\le k\le u$,
and $R_i$ are basic subgroups of $\Sp(V_i)$ for $i\ge u+1$.
Let $G_i=\Sp(V_i)$, $C_i=C_{G_i}(R_i)$ and $N_i=N_{G_i}(R_i)$.
Then $C=\prod\limits_{i=1}^t C_i$. 
Let  $\theta=\prod\limits_{i=1}^t \theta_i$, where $\theta_i$ is an
irreducible character of $C_i R_i$ trivial on $R_i$ for $i\ge 1$.

Now we rewrite the decompositions
$\theta=\prod\limits_{\Ga,d,i} \theta_{\Ga,d,i}^{t_{\Ga,d,i}}$, $R=\prod\limits_{\Ga,d,i}R_{\Ga,d,i}^{t_{\Ga,d,i}}.$
Then $m_{\Ga}(s)=\prod\limits_{d,i} t_{\Ga,d,i}\beta_\Ga 2^d$ for each $\Ga$. Here $\beta_\Ga=1$ or $2$ according as $\Ga\in\cF_1\cup\cF_2$ or $\Ga=x-1$.
Then we have
$$N_\theta=\prod_{\Ga,d,i}
 (N_{\Ga,d,i})_{\theta_{\Ga,d,i}}\wr\fS(t_{\Ga,d,i}),
\quad  \psi=\prod_{\Ga,d,i}  \psi_{\Ga,d,i}$$
with $\psi_{\Ga,d,i}$ a character of
$(N_{\Ga,d,i})_{\theta_{\Ga,d,i}}\wr\fS(t_{\Ga,d,i})$ covering $\theta_{\Ga,d,i}^{t_{\Ga,d,i}}$ and of $2$-defect zero as a character of $\left( (N_{\Ga,d,i})_{\theta_{\Ga,d,i}}\wr\fS(t_{\Ga,d,i})\right)/
R_{\Ga,d,i}^{t_{\Ga,d,i}}$.
By Clifford theory, $\psi_{\Ga,d,i}$ is of the form
$$\Ind_{ (N_{\Ga,d,i})_{\theta_{\Ga,d,i}}\wr\prod_j
  \fS(t_{\Ga,d,i,j})}^{ (N_{\Ga,d,i})_{\theta_{\Ga,d,i}}\wr\fS(t_{\Ga,d,i})}
  \left(\zeta_{\Ga,d,i}\cdot\prod_j\phi_{\kappa_{\Ga,d,i,j}}\right),
$$
where $t_{\Ga,d,i}=\sum_j t_{\Ga,d,i,j}$, $\zeta_{\Ga,d,i}$ is an extension of
$\prod_j \psi_{\Ga,d,i,j}^{t_{\Ga,d,i,j}}$ from $((N_{\Ga,d,i})_{\theta_{\Ga,d,i}})^{t_{\Ga,d,i}}$ to $(N_{\Ga,d,i})_{\theta_{\Ga,d,i}}\wr
\prod_j\fS(t_{\Ga,d,i,j})$, $\kappa_{\Ga,d,i,j} \vdash t_{\Ga,d,i,j}$ is an $e_\Ga$-core and $\phi_{\kappa_{\Ga,d,i,j}}$ is a character of $\fS(t_{\Ga,d,i,j})$ corresponding to $\kappa_{\Ga,d,i,j}$.

From this, the $B$-weights are in bijection with assignments
$$\coprod_\Ga\coprod_{d\ge 0}\scC_{\Ga,d}\to\{ 2\text{-cores} \},
  \quad \psi_{\Ga,d,i}\mapsto \kappa_{\Ga,d,i},$$
such that
$\sum\limits_{d\ge 0}2^d\sum\limits_{i=1}^{|\scC_{\Ga,d}|}|\kappa_{\Ga,d,i}|
=w_\Ga$.
Using the $2$-core towers for partitions from \cite[(1A)]{AF90} (or
\cite[\S 6]{Ol93}) one can check that for $\Ga\in\cF_1\cup\cF_2$, the
assignments $\coprod\limits_{d\ge 0}\scC_{\Ga,d}\to\{ 2\text{-cores} \}$,
$\psi_{\Ga,d,i}\mapsto \kappa_{\Ga,d,i}$, such that
$\sum\limits_{d\ge 0}2^d\sum\limits_{i=1}^{2^d}|\kappa_{\Ga,d,i}|=w_\Ga$ are
in bijection with $\mathscr P(w_\Ga)$. If $\Ga=x-1$, then 
$$\sum_{i=1}^{2}\bigg(\sum_{d\ge 0}2^d\sum_{j=1}^{2^d}|\kappa_{x-1,d,i,j}|\bigg)+|\kappa_{x-1,d,3,1}|=w_{x-1}$$
and then the assignments are in bijection with $\scT(w_{x-1})$.
Thus, indeed,
$\scT(w_{x-1})\times\prod\limits_{\Ga\in \cF_1\cup\cF_2} \mathscr P(w_\Ga)$ is
a labelling set for $\cW(B)$.
\medskip

If $\mu\in\scT(m)$ corresponds to
$(\la_1,\la_2,\la_3,\kappa_1,\kappa_2)\in\scT'(m)$, then we define $\mu^\dag$
to be the element of $\scT(m)$ corresponding to
$(\la_1,\la_3,\la_2,\kappa_2,\kappa_1)$.

\begin{prop}   \label{act-wei}
 Let $(R,\vhi)$ be a $2$-weight of $G$ belonging to the block $B_s$ and
 corresponding to $\big(\mu_{x-1},\prod\limits_{\Ga\in \cF_1\cup\cF_2}\mu_\Ga\big)
 \in\scT(w_{x-1})\times\prod\limits_{\Ga\in \cF_1\cup\cF_2} \mathscr P(w_\Ga)$.
 Then:
 \begin{enumerate}[\rm(a)]
  \item For $\sigma\in\langle F_p\rangle$, $(R,\vhi)^{\sigma}$ is a $2$-weight
   of $G$ belonging to the block $B_{{\sigma^*}^{-1}(s)}$ and corresponding to
   $\big(\mu_{x-1},\prod\limits_{\Ga\in \cF_1\cup\cF_2} \mu_{{}^{{\sigma^*}^{-1}}\Ga}\big)$, where ${}^{{\sigma^*}^{-1}}\Ga$ is defined as in \cite[p.~398]{FLZ19a}.
  \item $(R,\vhi)^\delta$ is a $2$-weight of $G$ belonging to the block $B_s$
   and corresponding to $\big(\mu_{x-1}^\dag,\prod\limits_{\Ga\in \cF_1\cup\cF_2} \mu_\Ga\big)$.
 \end{enumerate}
\end{prop}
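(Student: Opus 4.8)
The plan is to track the effect of $\sigma$ and $\delta$ through every stage of An's construction of the labelling set, reusing the building blocks already set up in the excerpt. Recall that a $B$-weight $(R,\vhi)$ is encoded by the decomposition $R=\prod_{\Ga,d,i}R_{\Ga,d,i}^{t_{\Ga,d,i}}$, the associated characters $\theta_{\Ga,d,i}$ and $\psi_{\Ga,d,i}$, and finally an assignment $\psi_{\Ga,d,i}\mapsto\kappa_{\Ga,d,i}$ to $2$-cores satisfying the numerical constraint $\sum_d 2^d\sum_i|\kappa_{\Ga,d,i}|=w_\Ga$. So I would establish three things in turn: (i) how $\sigma$ or $\delta$ acts on the blocks $B_s$ (equivalently on the semisimple classes of $G^*$) and hence on the polynomials $\Ga$ and the invariants $w_\Ga$; (ii) how it permutes the basic subgroups $R_{\Ga,d,i}$ and the characters $\theta_{\Ga,d}$, $\psi_{\Ga,d,i}$ in $\scC_{\Ga,d}$; and (iii) how these permutations translate, through the $2$-core-tower bijection of \cite[(1A)]{AF90}, into the combinatorial maps on $\scT(w_{x-1})$ and $\mathscr P(w_\Ga)$ claimed in (a) and (b).

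For part (a), the key point is that a field automorphism $\sigma=F_p^i\in\langle F_p\rangle$ acts on $G^*=\SO_{2n+1}(q)$ by the dual automorphism $\sigma^*$, which permutes the semisimple classes; on the level of elementary divisors this is exactly the permutation $\Ga\mapsto{}^{{\sigma^*}^{-1}}\Ga$ of $\cF$ described in \cite[p.~398]{FLZ19a}, and it preserves $\cF_0$, $\cF_1$, $\cF_2$, the degrees $d_\Ga$, $\delta_\Ga$, the signs $\veps_\Ga$, and hence the $2$-adic data $\alpha_\Ga$, $m_\Ga$. Thus $B_s^\sigma=B_{{\sigma^*}^{-1}(s)}$ with $w_\Ga(s)=w_{{}^{{\sigma^*}^{-1}}\Ga}({\sigma^*}^{-1}(s))$, and $\sigma$ sends the basic data attached to $\Ga$ to the same basic data attached to ${}^{{\sigma^*}^{-1}}\Ga$. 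By Lemma~\ref{act-wei-basic-case}(a), $\sigma$ fixes every element of $\scC_{\Ga,d}$ up to this relabelling of $\Ga$, so it fixes each $\kappa_{\Ga,d,i}$ and merely transports the whole $\Ga$-block of the assignment to the ${}^{{\sigma^*}^{-1}}\Ga$-block; under the $2$-core-tower bijection for $\Ga\in\cF_1\cup\cF_2$ this is precisely $\mu_\Ga\mapsto\mu_{{}^{{\sigma^*}^{-1}}\Ga}$, and $x-1$ is fixed so $\mu_{x-1}$ is unchanged. This gives (a).

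For part (b), $\delta$ fixes every block $B_s$ by Corollary~\ref{block-covering}(a), so only the $\psi_{\Ga,d,i}$-labels can move. By Lemma~\ref{act-wei-basic-case}(b)(1), $\delta$ fixes every element of $\scC_{\Ga,d}$ for $\Ga\in\cF_1\cup\cF_2$, so each $\mu_\Ga$ is unchanged — this is the easy half. The substance is the $\Ga=x-1$ part. Here I would use the explicit conventions \eqref{conven-weights-basic-case-1} and \eqref{conven-weights-basic-case-2}: $\delta$ fixes $\psi_{x-1,d,1,j}$ for all $j$ and swaps $\psi_{x-1,d,2,j}\leftrightarrow\psi_{x-1,d,2,j+2^{d-1}}$ (and in the $d=0$ case swaps $\psi_{x-1,0,2,1}\leftrightarrow\psi_{x-1,0,3,1}$). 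Under the identification $\scT(w_{x-1})\cong\scT'(w_{x-1})$, the "$1$-row" of cores $\{\kappa_{x-1,d,1,j}\}$ builds (via the $2$-core tower) the partition $\la_1$ together with the $2$-core $\kappa_1$ governing half the tower, while the "$2$-row" $\{\kappa_{x-1,d,2,j}\}_{j\le 2^{d-1}}$ and its $\delta$-image build $\la_2,\la_3$ and the $2$-cores $\kappa_1',\kappa_2$; the swap $j\leftrightarrow j+2^{d-1}$ is exactly the tree-automorphism of the $2$-core tower that exchanges the two halves at each level, which on $\scT'$ is $(\la_1,\la_2,\la_3,\kappa_1,\kappa_2)\mapsto(\la_1,\la_3,\la_2,\kappa_2,\kappa_1)$, i.e. $\mu_{x-1}\mapsto\mu_{x-1}^\dag$. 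This matches the definition of $\dag$ given just before the statement.

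I expect the main obstacle to be step (iii) for $\Ga=x-1$: namely, verifying carefully that the bookkeeping of the $2$-core tower — which indices $(d,j)$ feed into which of $\la_1,\la_2,\la_3,\kappa_1,\kappa_2$, and in which order — is compatible with the normalisation \eqref{conven-weights-basic-case-1}–\eqref{conven-weights-basic-case-2} and with the canonical $\scT(m)\leftrightarrow\scT'(m)$ correspondence, so that the half-shift $j\mapsto j+2^{d-1}$ really becomes the $\dag$ operation and not, say, $\dag$ composed with some core-conjugation. This is essentially the same computation carried out in \cite[\S 5]{FLZ19a} for the linear/unitary case, so I would model the argument on \cite[Lemma~5.3]{FLZ19a} and \cite[(1A)]{AF90}, and the equivariance of the $\dag$-map would then be a formal consequence once the dictionary is pinned down. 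Everything else — the action on blocks, the transport of basic subgroups, the $\cF_1\cup\cF_2$ case — is routine given Lemma~\ref{act-wei-basic-case} and the results of \cite{An93,FLZ19a} quoted above.
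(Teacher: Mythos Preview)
Your approach is correct and coincides with the paper's: the proof there simply states that, given Lemma~\ref{act-wei-basic-case} and the conventions \eqref{conven-weights-basic-case-1}--\eqref{conven-weights-basic-case-2}, the arguments of \cite[Lemma~3.6 and Proposition~3.11]{FLZ19a} (rather than Lemma~5.3, which concerns ordinary characters) carry over verbatim. Your description of which cores feed into which $\la_i,\kappa_i$ is slightly garbled --- the $i=1$ row builds $\la_1$ alone, the full $i=2$ row builds $\la_2$ (hence $\kappa_1'$ and the $2$-quotient $(\la_2',\la_3')$ in $\scT'$-coordinates), and $\kappa_{x-1,0,3,1}$ is the remaining $2$-core $\kappa$ --- but you rightly flag this bookkeeping as the point requiring care, and once it is pinned down the $\dag$-claim follows exactly as you outline.
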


\begin{proof}
Using Lemma~\ref{act-wei-basic-case}, (\ref{conven-weights-basic-case-1})
and~(\ref{conven-weights-basic-case-2}), the proofs of Lemma~3.6 and
Proposition~3.11 of \cite{FLZ19a} also apply here.
\end{proof}

\begin{rem}
By the above result, the action of $\langle F_p\rangle$ on $\cW(G)$ is induced
by the action of $\langle F_p^*\rangle$ on elementary divisors of semisimple
elements of $G^*$.
\end{rem}

\subsection{A Jordan decomposition for weights}
\label{Jordan-decomposition-for-weights}

Is there an analogue of Jordan decomposition for weights of finite groups of
Lie type? This question is posed in Problem~4.9 of \cite{Ma15}.
We give a positive answer for $2$-weights of $G=\Sp_{2n}(q)$.

Let $s$ be a semisimple $2'$-element of $G^*$. Recall from (\ref{decom-L}) that
$$C_{G^*}(s)^*\cong \Sp_{2w_{x-1}}(q)\times \prod_{\Ga\in\cF_1\cup \cF_2} \GL_{m_\Ga(s)}(\veps_\Ga q^{\delta_\Ga}).$$

\begin{prop}   \label{Jor-dec-weights}
	Let $s$ be a semisimple $2'$-element of $G^*$ and $b_s$ the unipotent
	$2$-block of $C_{G^*}(s)^*$. Then there is a bijection $\mathscr J_s$ between 
	$\cW(b_s)$ and $\cW(B_s)$ such that 
	for $\sigma\in\langle F_p\rangle$ or $\sigma=\delta$ the following diagram commutes
	\begin{equation}\label{comm-diag}
	\xymatrix{
		\cW(b_s) \ar[r]^{\sigma}\ar[d]_{\mathscr J_s}& 	\cW(b_{s_\sigma}) \ar[d]_{\mathscr J_{s_\sigma}}\\
		\cW(B_s)\ar[r]^{\sigma}& 	\cW(B_{s_\sigma})}
	\addtocounter{thm}{1}\tag{\thethm}
	\end{equation}
	where $s_\sigma={\sigma^*}^{-1}(s)$ if $\sigma\in\langle F_p\rangle$, and
	$s_\sigma=s$ if $\sigma=\delta$ defined as in Proposition \ref{Jor-dec-Bar}. 
\end{prop}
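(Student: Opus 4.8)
The plan is to mirror, on the weight side, the structure already established for Brauer characters in Proposition~\ref{Jor-dec-Bar}, using An's combinatorial parametrisation from the previous subsection instead of a Morita equivalence. First I would recall that, by the construction preceding Proposition~\ref{act-wei}, $\cW(B_s)$ is labelled by $\scT(w_{x-1})\times\prod_{\Ga\in\cF_1\cup\cF_2}\mathscr P(w_\Ga)$, where $w_\Ga=w_\Ga(s)$ depends only on the multiplicities $m_\Ga(s)$. On the other hand, $C_{G^*}(s)^*\cong\Sp_{2w_{x-1}}(q)\times\prod_{\Ga\in\cF_1\cup\cF_2}\GL_{m_\Ga(s)}(\veps_\Ga q^{\delta_\Ga})$, and $b_s$ is the principal $2$-block of this group. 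So I would first establish that $\cW(b_s)$ carries exactly the same labelling set: the weights of the principal $2$-block of $\Sp_{2w_{x-1}}(q)$ are labelled by $\scT(w_{x-1})$ (this is An's result specialised to $s=1$, i.e.\ the unipotent block, where the only relevant polynomial is $x-1$ with $w_{x-1}(1)=w_{x-1}$), and the weights of the principal $2$-block of each $\GL_{m}(\pm q^{\delta})$ are labelled by $\mathscr P(m)$ after passing through the appropriate $2$-core tower bijection exactly as in An's reduction (so the factor indexed by $\Ga$ contributes $\mathscr P(w_\Ga)$, matching the target). This gives the bijection $\mathscr J_s$ abstractly, defined as the composite of the two labellings; it automatically preserves the decomposition into the relevant block.

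The heart of the argument is then checking that the square~\eqref{comm-diag} commutes for $\sigma\in\langle F_p\rangle$ and for $\sigma=\delta$. For $\sigma=F_p$ (and its powers), by Proposition~\ref{act-wei}(a) the field automorphism acts on the label $\big(\mu_{x-1},\prod_\Ga\mu_\Ga\big)$ of $\cW(B_s)$ by fixing $\mu_{x-1}$ and permuting the factors $\mu_\Ga\mapsto\mu_{{}^{{\sigma^*}^{-1}}\Ga}$ according to the action of $\sigma^*$ on elementary divisors; on the source side, $\sigma$ induces an isomorphism $C_{G^*}(s)^*\to C_{G^*}({\sigma^*}^{-1}(s))^*$ which, under~\eqref{decom-L}, is the field automorphism on the $\Sp$-factor and permutes/identifies the $\GL$-factors in exactly the same way (since $m_{{}^{{\sigma^*}^{-1}}\Ga}({\sigma^*}^{-1}(s))=m_\Ga(s)$). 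Since a field automorphism fixes all weights of $\scC_{\Ga,d}$ by Lemma~\ref{act-wei-basic-case}(a), and fixes the labelling of weights of principal blocks of $\Sp$ and $\GL$ (the latter because $\GL$ and $\GU$ have trivial outer automorphism action relevant here, again by An's list), both composites agree. For $\sigma=\delta$: by Proposition~\ref{act-wei}(b), $\delta$ sends $\big(\mu_{x-1},\prod_\Ga\mu_\Ga\big)$ to $\big(\mu_{x-1}^\dag,\prod_\Ga\mu_\Ga\big)$, i.e.\ it acts only on the $\Sp_{2w_{x-1}}$-factor's label and by the involution $\dag$ swapping the two $2$-quotient partitions and the two $2$-cores. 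One must check this is precisely the action of the outer diagonal automorphism of $\Sp_{2w_{x-1}}(q)$ on $\cW$ of its principal block, which is again Proposition~\ref{act-wei}(b) applied to the group $\Sp_{2w_{x-1}}(q)$ and $s=1$ (where the only polynomial is $x-1$). On the $\GL$-factors $\delta$ acts trivially on weights by Lemma~\ref{act-wei-basic-case}(b)(1). Hence the square commutes.

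I would organise the proof so that the combinatorial bijection is built factor-by-factor along the decomposition~\eqref{decom-L}: for each $\Ga\in\cF_1\cup\cF_2$ match An's labelling $\coprod_{d\ge0}\scC_{\Ga,d}\to\{2\text{-cores}\}$ used inside $B_s$ with the corresponding labelling used inside the principal block of $\GL_{m_\Ga(s)}(\veps_\Ga q^{\delta_\Ga})$, and for $\Ga=x-1$ match the $\scT(w_{x-1})$-labelling used inside $B_s$ with that of $\cW$ of the principal block of $\Sp_{2w_{x-1}}(q)$. Here the key observation, which I would state as a small lemma, is that An's local data $(R_{\Ga,\ga,\bc},\theta_{\Ga,d,i},\scC_{\Ga,d})$ for a polynomial $\Ga$ inside $\Sp_{2n}(q)$ with $B=B_s$ depend \emph{only} on $\veps_\Ga$, $m_\Ga$, $\alpha_\Ga$ and on whether $\Ga=x-1$ — the same local pieces appear in the Sylow/weight analysis of $\GL_{m_\Ga(s)}(\veps_\Ga q^{\delta_\Ga})$ and of $\Sp_{2w_{x-1}}(q)$ — so the global bijection on weights is forced. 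The main obstacle I expect is exactly this bookkeeping: verifying that the $2$-core tower identifications of \cite[(1A)]{AF90} that An uses to pass from assignments on $\coprod_d\scC_{\Ga,d}$ to $\mathscr P(w_\Ga)$ (resp.\ $\scT(w_{x-1})$) are genuinely the \emph{same} identifications on the $C_{G^*}(s)^*$-side as on the $G$-side, together with tracking the $\dag$-involution and the elementary-divisor permutation coherently through these identifications; once that is pinned down, commutativity of~\eqref{comm-diag} reduces to Lemma~\ref{act-wei-basic-case} and Proposition~\ref{act-wei} applied on each factor.
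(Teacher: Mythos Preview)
Your proposal is correct and follows essentially the same approach as the paper: define $\mathscr J_s$ by matching the combinatorial labels $\scT(w_{x-1})\times\prod_{\Ga}\mathscr P(w_\Ga)$ on both sides, then verify commutativity of~\eqref{comm-diag} using Proposition~\ref{act-wei} on the $G$-side and the known action of automorphisms on weights of the factors of $C_{G^*}(s)^*$. One small correction: for the claim that $\delta$ acts trivially on the $\GL$-factors of $\cW(b_s)$, Lemma~\ref{act-wei-basic-case}(b)(1) is not quite the right reference (that lemma concerns $\scC_{\Ga,d}$ inside $\Sp(V_{\Ga,d})$, not weights of $\GL_m(\pm q)$); the paper instead observes that the $\GL$-components of $C_{G^*}(s)^*$ commute with $\langle\delta\rangle$, so $\delta$ is inner there, and cites \cite{LZ18} for the action of automorphisms on weights of $\GL_m(\pm q)$.
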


\begin{proof}
By \cite{An92,An93b}, $\mathscr P(m)$ is a labelling set for the unipotent
$2$-blocks of the general linear and unitary group $\GL_m(\pm q)$. The action
of automorphisms on weights of $\GL_m(\pm q)$ is given in \cite[\S 5]{LZ18}:
the action of field or graph automorphisms on $2$-weights of $\GL_m(\pm q)$
is induced by the action of field or graph automorphisms on elementary
divisors. So field automorphisms fix all $2$-weights of unipotent $2$-blocks of
$\GL_m(\pm q)$.
Furthermore, for $C_{G^*}(s)^*$ as in~(\ref{decom-L}), every component
$\GL_{m_\Ga(s)}(\veps_\Ga q^{\delta_\Ga})$ ($\Ga\in\cF_1\cup\cF_2$) commutes
with $\langle\delta\rangle$.

The bijection $\mathscr J_s$ is now given in terms of the
combinatorial parametrisations for $\cW(b_s)$ and $\cW(B_s)$: a $2$-weight in
$\cW(b_s)$ corresponds to the $2$-weight in $\cW(B_s)$ with the same parameters.
The commutative diagrams can be checked directly by the above paragraph and
Proposition \ref{act-wei}.
\end{proof}

We have further positive answers to \cite[Problem~4.9]{Ma15}. Let
\begin{itemize}
 \item $G=\GL_n(\pm q)$ and $\ell$ any prime not dividing $q$; or
 \item $G=\Sp_{2n}(q)$ or $\SO_{2n+1}(q)$ with odd $q$ and $\ell$ an odd
  prime not dividing $q$.
\end{itemize}
Let $s$ be a semisimple $\ell'$-element of $G^*$. 
By the combinatorial parametrisation of $\ell$-blocks of $G=\GL_n(\pm q)$ given
in \cite{FS82,Br86}, of $G=\SO_{2n+1}(q)$ given in \cite[(10B)]{FS89}, and of
$G=\Sp_{2n}(q)$ given in \cite[\S 5.2]{FLZ19a} there is a bijection between the
$\ell$-blocks in $\cE_\ell(G,s)$ and the unipotent $\ell$-blocks of
$C=C_{G^*}(s)$, and $C=C^\circ$ unless $G=\Sp_{2n}(q)$ and $-1$ is an
eigenvalue of $s$, in which case $|C/C^\circ|=2$. Let $B_s$ be an $\ell$-block
in $\cE_\ell(G,s)$ and let $b_s$ correspond to $B_s$ under this bijection.

Let $\sigma$ be a field or diagonal automorphism for $G=\Sp_{2n}(q)$, a field
automorphism for $G=\SO_{2n+1}(q)$, or a field or graph automorphism for
$G=\GL_n(\pm q)$. As before, we also let $s_\sigma={\sigma^*}^{-1}(s)$ if
$\sigma$ is a field or graph automorphism and $s_\sigma=s$ if $\sigma$ is a
diagonal automorphism here.

\begin{prop}   \label{Jor-dec-wei-GLGUSpSO}
 Keep the above hypotheses and setup. Then there exists a bijection
 $\mathscr J_s$ between $\cW(b_s)$ and $\cW(B_s)$ and (\ref{comm-diag})
 commutes for $G$, $\ell$, $B_s$, $b_s$ and $\sigma$ as given above.
\end{prop}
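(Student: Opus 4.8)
The plan is to treat all four families simultaneously via their common combinatorial description of $\ell$-blocks and weights, exactly as in Proposition~\ref{Jor-dec-weights} but now extracting only what the case distinctions actually need. First I would fix, for each of $G=\GL_n(\pm q)$, $\SO_{2n+1}(q)$, $\Sp_{2n}(q)$, the labelling set of $\cW(B_s)$ coming from the references quoted (\cite{FS82,Br86,An93} for $\GL$, \cite[(10B)]{FS89} plus An's work for $\SO_{2n+1}$, \cite[(6D)]{An93} and \cite[\S5.2]{FLZ19a} for $\Sp_{2n}$), and observe that in each case the combinatorics factors over the elementary divisors $\Ga$ of $s$: a weight in $\cE_\ell(G,s)$ is an assignment attaching to each $\Ga$ a tuple of partitions / $\ell$-cores whose total weight matches $m_\Ga(s)$ (resp. $w_\Ga(s)$). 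The unipotent $\ell$-blocks of $C=C_{G^*}(s)$ decompose compatibly: $C^\circ$ is a product of $\GL_{m_\Ga}(\veps_\Ga q^{\delta_\Ga})$'s for $\Ga\ne x\mp1$ together with a classical group of the same type as $G$ carrying the part indexed by $x-1$ (for $\SO_{2n+1}$ and $\Sp_{2n}$) or $x\mp1$. Then $\mathscr J_s$ is the identity on parameters: a weight of $b_s$ and the weight of $B_s$ with the same combinatorial data correspond.

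Next I would verify the commutativity of~(\ref{comm-diag}) one automorphism type at a time, reducing everything to the action on elementary divisors and on the ``basic'' data. For field (and graph) automorphisms, the key input is that their action on $\cW(G)$ is induced by ${\sigma^*}^{-1}$ on the set $\cF$ of elementary divisors: for $\GL_n(\pm q)$ this is \cite[\S5]{LZ18}, for $\Sp_{2n}(q)$ it is the Remark after Proposition~\ref{act-wei}, and for $\SO_{2n+1}(q)$ one uses the analogous statement (either directly from An--Fong type arguments or by the same method as \cite{FLZ19a}); on the dual side $\sigma$ permutes the factors of $C=C_{G^*}(s)$ by ${\sigma^*}^{-1}$ on $\Ga$ in exactly the same way, and fixes the unipotent weights of each general linear/unitary factor. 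For diagonal automorphisms (only relevant when $G=\Sp_{2n}(q)$ and $-1$ is an eigenvalue of $s$, so $|C/C^\circ|=2$), the content is precisely Proposition~\ref{act-wei}(b) together with Lemma~\ref{act-wei-basic-case}: $\delta$ acts on the $x-1$-part by the involution $\mu_{x-1}\mapsto\mu_{x-1}^\dag$ on $\scT(w_{x-1})$ and trivially on all other factors, and on the unipotent weights of $C_{G^*}(s)^*\cong\Sp_{2w_{x-1}}(q)\times\prod\GL$ the same involution appears by Proposition~\ref{Jor-dec-weights}; for the $\SO_{2n+1}$ and $\GL$ cases no diagonal automorphism is in play so $s_\sigma=s$ and the diagram is trivially commutative there, while for $\GL$ with a graph automorphism one invokes the symmetry $q\leftrightarrow-q$ already handled in \cite[\S5]{LZ18}.

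Assembling these, $\mathscr J_s$ is a bijection by construction and each square commutes because both the horizontal maps (action of $\sigma$) and the vertical maps (matching of parameters) are ``diagonal'' in the elementary-divisor decomposition and agree factor-by-factor with the already-established $\Sp$, $\GL$, $\GU$ statements. The main obstacle I anticipate is the bookkeeping for $G=\SO_{2n+1}(q)$: one must confirm that the block-parametrisation of \cite[(10B)]{FS89} and An's weight classification for odd orthogonal groups factor over $\cF$ in the same shape as for $\Sp_{2n}(q)$, that field automorphisms act on those weights through ${\sigma^*}^{-1}$ on $\cF$, and that the unipotent block of the $x-1$-component behaves well — none of this is conceptually new, but it is the step where one cannot simply cite an earlier statement in this paper and must instead reproduce the $\Sp$-argument in the orthogonal setting. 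A secondary, minor point is checking that $C=C^\circ$ in all the listed cases except the one noted, so that ``unipotent $\ell$-blocks of $C_{G^*}(s)$'' is unambiguous; this is immediate from the centraliser formula~(\ref{eq:centr}) since for odd $\ell$ and $\GL_n(\pm q)$ the relevant centralisers are connected.
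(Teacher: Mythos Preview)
Your overall strategy---define $\mathscr J_s$ as the identity on the common combinatorial parameters and then check commutativity factor-by-factor over the elementary divisors---is the same as the paper's. However, there is a genuine confusion in your treatment of $G=\Sp_{2n}(q)$.

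The present proposition concerns $\Sp_{2n}(q)$ for \emph{odd} primes $\ell$, whereas Proposition~\ref{act-wei}, Lemma~\ref{act-wei-basic-case}, the Remark after Proposition~\ref{act-wei}, and the labelling set $\scT(w_{x-1})$ with its involution $\mu\mapsto\mu^\dag$ are all specific to $\ell=2$. For odd $\ell$ the weight labels are tuples in $\mathscr P(2e_\Ga,w_\Ga)$ for $\Ga\in\cF_0$, not elements of $\scT(w_{x-1})$; the correct reference for the action of automorphisms is \cite[Prop.~4.7 and 5.14]{FLZ19a}. More importantly, since $s$ is now an $\ell'$-element rather than a $2'$-element, $-1$ can be an eigenvalue of $s$, and the centraliser $C_{G^*}(s)^*$ is not $\Sp_{2w_{x-1}}(q)\times\prod\GL$ as you write: it acquires an additional factor $C_{x+1}\cong\GO^\pm_{m_{x+1}(s)}(q)$. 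The disconnectedness $|C/C^\circ|=2$ comes from this orthogonal factor, not from the symplectic one.

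Consequently, your description of the action of $\delta$ is on the wrong factor. For odd $\ell$, the diagonal automorphism fixes the $x-1$ (symplectic) part and instead acts on $\cW(b_{x+1})$ by multiplication with the non-trivial linear character of $C_{x+1}/C_{x+1}^\circ$, interchanging the weights in $\cW(\cE_\ell(C_{x+1},1))$; on the $B_s$ side, \cite[Prop.~5.14]{FLZ19a} shows that $\delta$ acts fixed-point freely on $\cW(\cE_\ell(G,s))$ whenever $-1$ is an eigenvalue of $s$. Matching these two free actions is what makes the diagram commute, and it is not the $\dag$-involution mechanism from the $\ell=2$ case. Your $\SO_{2n+1}$ bookkeeping concern, by contrast, is adequately addressed by the same reference \cite[Prop.~4.7]{FLZ19a}.
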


Here, the action of $\delta$ on the conjugacy classes of weights in unipotent
blocks is realised by the multiplication with the non-trivial linear character
of $C/C^\circ$ on $\cW(C)$.

\begin{proof}
We first introduce some notation from \cite{FS82,FS89}. For $G=\GL_n(\eps q)$
with $\eps =\pm 1$, we let $\cF$ be the set of polynomials defined in
\cite[p.~112]{FS82}, so that elements of $\cF$ serve as the ``elementary
divisors” for elements of $G$. Note that this $\cF$ for linear and unitary
groups here is not the same as the $\cF$ for symplectic and orthogonal groups
defined in \S \ref{notations-and-conventions}. For $\Ga\in\cF$, we let $d_\Ga$
be the degree of $\Ga$ and $e_\Ga$ the multiplicative order of
$(\eps q)^{d_\Ga}$ modulo $\ell$. Note that $e_\Ga=1$ for all $\Ga$ if $\ell=2$.
Then for a semisimple element $s\in G^*$ we denote by $m_\Gamma(s)$ the
multiplicity of $\Ga\in\cF$ in its primary decomposition.
By \cite{FS82,Br86}, the $\ell$-blocks of $G=\GL_n(\eps q)$ are in bijection
with the $G$-conjugacy classes of pairs $(s,\ka)$, where $s$ is a semisimple
$\ell'$-element of $G$, $\ka=\prod_\Ga \ka_\Ga$ and $\ka_\Ga$ is an
$e_\Ga$-core of some partition of $m_\Ga(s)$ for each $\Ga$. 
We write $C=\prod_\Ga C_\Ga$, where
$C_\Ga=\GL_{m_\Ga(s)}((\eps q)^{d_\Ga})$.
We let $w_\Ga$ be the integer such that $m_\Ga(s)=|\ka_\Gamma|+e_\Ga w_\Ga$.

For $G=\Sp_{2n}(q)$ or $\SO_{2n+1}(q)$ and an odd prime $\ell$, we use the
notation defined as before, especially in \S \ref{notations-and-conventions}.
Let $e_\Ga$ be the multiplicative order of $q^2$ or $\veps q^{\delta_\Ga}$
modulo $\ell$ according as $\Ga\in\cF_0$ or $\cF_1\cup\cF_2$.
By \cite[(10B)]{FS89} and \cite[\S 5.2]{FLZ19a}, the $\ell$-blocks of $G$ are in
bijection with the $G^*$-conjugacy
classes of pairs $(s,\ka)$, where $s\in G^*$ is a semisimple $\ell'$-element,
$\ka=\prod_\Ga \ka_\Ga$, $\ka_\Ga$ is an $e_\Ga$-core of some partition of
$m_\Gamma(s)$ if $\Ga\in\cF_1\cup\cF_2$, and $\ka_\Ga$ is an $e_\Ga$-core of
some Lusztig symbol~(see \cite[\S 2.4]{FLZ19a} for details) if $\Ga\in\cF_0$
and is counted twice if $G=\Sp_{2n}(q)$, $\Ga=x+1$ and $\ka_{x+1}$ is
non-degenerate (cf. \cite[Thm.~5.10]{FLZ19a}).
In particular, the unipotent $\ell$-blocks are labelled by certain
$e_\Ga$-cores. By \cite[Thm.]{CE94}, the unipotent $\ell$-blocks of
$\SO_{2n}^\pm(q)$ (and then of $\GO_{2n}^\pm(q)$) are also labelled by 
$e_\Ga$-cores; see e.g. \cite[\S 6]{FLZ19a}. We write  $C=\prod_\Ga C_\Ga$,
where $C_\Ga=\GL_{m_\Ga(s)}(\varepsilon_\Ga q^{\delta_\Ga})$ if
$\Ga\in\cF_1\cup\cF_2$, $C_\Ga=\SO_{m_\Ga(s)+1}(q)$ if $G=\SO_{2n+1}(q)$ and
$\Ga\in\cF_0$, and $C_{x-1}=\Sp_{m_{x-1}(s)-1}(q)$ and
$C_{x+1}=\GO^\pm_{m_{x+1}(s)}(q)$ for $G=\Sp_{2n}(q)$. Now we let $w_\Ga$ be
the integer
such that $m_\Ga(s)=|\ka_\Gamma|+e_\Ga w_\Ga$ if $\Ga\in\cF_1\cup \cF_2$, and
$\lfloor\frac{m_\Ga(s)-1}{2}\rfloor=|\ka_\Gamma|+e_\Ga w_\Ga$ if $\Ga\in\cF_0$.

Then in all cases, if $B_s$ is an $\ell$-block with label $(s,\ka)$, then
$b_s=\otimes_\Ga b_\Ga$, where $b_\Ga$ is a unipotent $\ell$-block of $C_\Ga$
with label $\ka_\Ga$.

If $d\ge 1$ and $m$ are integers, then we denote by $\mathscr P(d,m)$ the set
of tuples $(\lambda_1,\ldots,\lambda_d)$ of partitions such that
$|\lambda_1|+\cdots+|\lambda_d|=m$. Then by \cite{AF90,An92,An93b,An94},
$\prod_\Ga \mathscr P_\Ga$ is a labelling set for the $\ell$-weights
$\cW(B_s)$, where $\mathscr P_\Ga=\mathscr P(e_\Ga,w_\Ga)$ unless
$G=\Sp_{2n}(q)$ or $\SO_{2n+1}(q)$ and $\Ga\in\cF_0$, in which case
$\mathscr P_\Ga=\mathscr P(2e_\Ga,w_\Ga)$. On the other hand, we can check
directly that $\mathscr P_\Ga$ is a labelling set for $\cW(b_\Ga)$. 
The bijection $\mathscr J_s$ can now be defined in terms of these
parametrisations for $\cW(b_s)$ and $\cW(B_s)$: an $\ell$-weight in $\cW(b_s)$
corresponds to the $\ell$-weight in $\cW(B_s)$ with the same parameters.
 
By \cite[Thm.~1.1]{LZ18}, the actions of field and graph automorphisms on
weights of $\GL_n(\eps q)$ are induced by the actions on elementary divisors.
By \cite[Prop.~4.7 \& 5.14]{FLZ19a}, the action of field automorphisms on
weights of $\Sp_{2n}(q)$ or $\SO_{2n+1}(q)$ is also induced by the action on
elementary divisors. In addition, by \cite[Prop.~5.14]{FLZ19a} again, the
diagonal automorphism $\delta$ fixes all weights of $B_s$ unless
$G=\Sp_{2n}(q)$ and $-1$ is an eigenvalue of $s$, in which case $\delta$ acts
fixed-point freely on $\cW(\cE_\ell(G,s))$.
In particular, if $\sigma$ is a graph automorphisms of $\GL_n(\eps q)$ or a
field automorphism, then $\sigma$ fixes all weights of $b_s$ (for the assertion
that field automorphisms fix the weights of the unipotent block $b_{x+1}$ of
$C_{x+1}\cong \GO^\pm_{m_{x+1}(s)}(q)$, see \cite[Lemma 6.3]{FLZ19a}).
If $G=\Sp_{2n}(q)$, then the weights of $b_\Ga$ are invariant under $\delta$
unless $\Ga= x+1$, in which case $\delta$ acts by multiplication with the
non-trivial linear character of $C_{x+1}$ on $\cW(\cE_\ell(C_{x+1},1))$. Thus
the weights in $\cW(\cE_\ell(C_{x+1},1))$ are also interchanged by $\delta$.
From this the commutative diagrams can be checked directly for all cases.
\end{proof}

\subsection{Weights of the unipotent $2$-block}

Now we focus on the unipotent (principal) $2$-block $B_1$ of $G=\Sp_{2n}(q)$
and summarise the results for the action of $\Aut(G)$ on $\cW(B_1)$.
Both $\scT(n)$ and $\scT'(n)$ are labelling sets for the $B_1$-weights of $G$.
Let $(R,\vhi)$ be a $B_1$-weight corresponding to
$(\la_1,\la_2,\la_3,\kappa_1,\kappa_2)\in \scT'(n)$.
Then by Proposition~\ref{act-wei},
$(R,\vhi)^\delta$ corresponds to $(\la_1,\la_3,\la_2,\kappa_2,\kappa_1)$.

Let $\scT'_1(n)$ be the subset of $\scT'(n)$ consisting of tuples
$(\la_1,\la_2,\la_3,\kappa_1,\kappa_2)$ such that $\la_2=\la_3$ and
$\kappa_1=\kappa_2$. Then by Proposition \ref{act-wei} we have:

\begin{prop}   \label{act-wei-uni}
 \begin{enumerate}[\rm(a)]
  \item Every $B_1$-weight of $G$ is invariant under the action of field
   automorphisms.
  \item If $(R,\vhi)$ is a $B_1$-weight, then the $G$-conjugacy class of
   $2$-weights of $G$ containing $(R,\vhi)$ is $\tG$-invariant if and only if
   $(R,\vhi)$ corresponds to an element in $\scT'_1(n)$.
 \end{enumerate}
\end{prop}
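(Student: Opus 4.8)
The plan is to deduce both assertions directly from Proposition~\ref{act-wei}, applied with $s=1$. First I would record the shape of the unipotent block: since $G^*=\SO_{2n+1}(q)$, the element $s=1$ has $x-1$ as its unique elementary divisor, with $m_{x-1}(1)=2n+1$; thus $w_{x-1}(1)=n$ and $w_\Ga(1)=0$ for every $\Ga\in\cF_1\cup\cF_2$. Hence the labelling set $\scT(w_{x-1})\times\prod_{\Ga\in\cF_1\cup\cF_2}\mathscr P(w_\Ga)$ for $\cW(B_1)$ collapses to $\scT(n)$, which we identify throughout with $\scT'(n)$ via the canonical bijection between $\scT(n)$ and $\scT'(n)$.

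For (a), take $\sigma\in\langle F_p\rangle$. Since $\sigma^*$ fixes the polynomial $x-1$, Proposition~\ref{act-wei}(a) shows that $(R,\vhi)^\sigma$ again lies in $B_1$ and that its label has the same $\scT(w_{x-1})$-component as that of $(R,\vhi)$, while the remaining components --- which are trivial here --- are merely permuted. So $(R,\vhi)$ and $(R,\vhi)^\sigma$ carry the same label in $\scT'(n)$ and are therefore $G$-conjugate, which is~(a).

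For (b), note that the $G$-conjugacy class of $(R,\vhi)$ is $\tG$-invariant if and only if $(R,\vhi)^\delta$ is $G$-conjugate to $(R,\vhi)$, because $\tG$ induces on $G$ exactly the inner automorphisms together with the order-two diagonal automorphism $\delta$. By Proposition~\ref{act-wei}(b), applied with $s=1$ (so that $s_\delta=1$), $(R,\vhi)^\delta$ again belongs to $B_1$, and under the identification $\scT(n)=\scT'(n)$ its label is obtained from $(\la_1,\la_2,\la_3,\kappa_1,\kappa_2)$ by the swap $(\la_2,\kappa_1)\leftrightarrow(\la_3,\kappa_2)$ induced by the involution $\mu\mapsto\mu^\dag$. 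Hence $(R,\vhi)^\delta$ is $G$-conjugate to $(R,\vhi)$ exactly when $\la_2=\la_3$ and $\kappa_1=\kappa_2$, i.e. exactly when the label lies in $\scT'_1(n)$. The only step requiring any care is the routine bookkeeping translating the action $\mu\mapsto\mu^\dag$ on $\scT(n)$ into the swap on $\scT'(n)$; this is purely combinatorial, resting on the uniqueness of the $2$-core and $2$-quotient of a partition, and is the only --- and very mild --- obstacle.
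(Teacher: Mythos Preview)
Your proposal is correct and follows exactly the paper's approach: the paper simply states that Proposition~\ref{act-wei-uni} follows from Proposition~\ref{act-wei}, and you have spelled out why. One small remark: the step you flag as needing ``care'' --- translating $\mu\mapsto\mu^\dag$ into the swap $(\la_2,\kappa_1)\leftrightarrow(\la_3,\kappa_2)$ on $\scT'(n)$ --- is in fact immediate from the paper's \emph{definition} of $\mu^\dag$, which is given precisely as this swap on the $\scT'(m)$ side, so no further combinatorial argument is required.
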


\section{An equivariant bijection}   \label{sec:equ bijection uni}

Recall that $B_1$ is the unipotent $2$-block of $G=\Sp_{2n}(q)$ with $q$ odd.

\begin{prop}   \label{AWC-B0}
 We have $|\IBr(B_1)|=|\cW(B_1)|$.
\end{prop}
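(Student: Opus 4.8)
The plan is to prove $|\IBr(B_1)|=|\cW(B_1)|$ by matching the two combinatorial parametrisations we have already established for these sets. By Proposition~\ref{prop:IBR-uni-class} (and its reformulation), the set $\IBr(B_1)$ is labelled by $\scU(n)$, the set of maps $\bm:\ZZ_{\ge1}\to\ZZ_{\ge0}$ with $\sum_j j\bm(j)=2n$ and $\bm(j)$ even for odd $j$, where each $\bm$ is counted $2^{k_\bm}$ times with $k_\bm$ the number of even $j$ with $\bm(j)\ne0$. On the other side, taking $s=1$ in the construction recalled after the statement of An's proposition, we have $w_{x-1}(1)=n$ (since $m_{x-1}(1)=2n+1=2w_{x-1}+1$) and no contribution from $\cF_1\cup\cF_2$, so $\cW(B_1)$ is labelled by $\scT(n)$, equivalently by $\scT'(n)$. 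Thus it suffices to produce a bijection $\scU(n)\leftrightarrow\scT'(n)$ that respects the stated multiplicities, i.e.\ an honest bijection between the two multisets; equivalently, to show $|\scU(n)|=|\scT'(n)|$ as multisets.

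First I would translate both sides into generating functions. For the Brauer side: an element of $\scU(n)$, \emph{with} its multiplicity $2^{k_\bm}$, is the same datum as a map $j\mapsto$ (a partition with $\bm(j)$ parts, or rather the multiplicity $\bm(j)$ together with, for even $j$ with $\bm(j)>0$, a binary choice). For odd $j$ we must have $\bm(j)$ even. So the multiset $\scU(n)$ has generating function
\[
\prod_{i\ \mathrm{odd}}\frac{1}{1-x^{2i}}\ \cdot\ \prod_{i\ \mathrm{even}}\Bigl(1+\frac{2x^{i}}{1-x^{i}}\Bigr)
=\prod_{i\ \mathrm{odd}}\frac{1}{1-x^{2i}}\ \cdot\ \prod_{i\ \mathrm{even}}\frac{1+x^{i}}{1-x^{i}},
\]
where $x$ tracks $\tfrac12\sum j\bm(j)=n$, i.e.\ the coefficient of $x^{2n}$ (after the substitution $x\mapsto x$, tracking $2n$) is $|\scU(n)|$ counted with multiplicity. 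For the weight side: an element of $\scT'(n)$ is a tuple $(\la_1,\la_2,\la_3,\kappa_1,\kappa_2)$ with $|\la_1|+2(|\la_2|+|\la_3|)+|\kappa_1|+|\kappa_2|=n$, $\la_i$ arbitrary partitions and $\kappa_i$ $2$-cores (so $|\kappa_i|\in\{0,1,3,6,\dots\}$, with generating function $\sum_{k\ge0}y^{\binom{k+1}{2}}$). Hence $\scT'(n)$ has generating function
\[
\Bigl(\prod_{i\ge1}\frac1{1-y^i}\Bigr)\Bigl(\prod_{i\ge1}\frac1{1-y^{2i}}\Bigr)^{2}\Bigl(\sum_{k\ge0}y^{\binom{k+1}{2}}\Bigr)^{2},
\]
with $y$ tracking $n$. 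The claim is then the identity of power series: the $x^{2n}$-coefficient of the first product equals the $y^{n}$-coefficient of the second, i.e.\ substituting $y=x^2$,
\[
\prod_{i\ \mathrm{odd}}\frac{1}{1-x^{2i}}\prod_{i\ \mathrm{even}}\frac{1+x^{i}}{1-x^{i}}
=\Bigl(\prod_{i\ge1}\frac1{1-x^{2i}}\Bigr)\Bigl(\prod_{i\ge1}\frac1{1-x^{4i}}\Bigr)^{2}\Bigl(\sum_{k\ge0}x^{2\binom{k+1}{2}}\Bigr)^{2}.
\]

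The remaining work is to verify this $q$-series identity, which I expect to be the main (and really the only) obstacle. The right-hand factor $\sum_{k\ge0}x^{k(k+1)}$ is a theta-type series, and by the Jacobi triple product (or Gauss's identity $\sum_{k\ge0}x^{k(k+1)/2}=\prod_{i\ge1}\frac{(1-x^{2i})}{(1-x^{2i-1})}=\prod_{i\ge1}\frac{1-x^{2i}}{1-x^{2i-1}}$) it rewrites as an infinite product; after substituting $x\mapsto x^2$ inside Gauss's identity one gets $\sum_{k\ge0}x^{2\binom{k+1}{2}}=\prod_{i\ge1}\frac{1-x^{4i}}{1-x^{4i-2}}$. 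Plugging this in, the right-hand side becomes a pure infinite product in $x$, and then the identity reduces to a bookkeeping check that the exponents of each cyclotomic-type factor $(1-x^m)$ match on both sides. Concretely: the left side contributes $(1-x^{2i})^{-1}$ for odd $i$ and $(1-x^i)^{-1}(1+x^i)=(1-x^i)^{-1}(1-x^{2i})(1-x^i)^{-1}\cdot$(no) — better, use $1+x^i=(1-x^{2i})/(1-x^i)$, so for even $i$ the factor is $(1-x^{2i})(1-x^i)^{-2}$; the right side, after the theta substitution, is $\prod_i(1-x^{2i})^{-1}\prod_i(1-x^{4i})^{-1}\prod_i(1-x^{4i-2})^{2}$ times $\prod_i(1-x^{4i})^{2}$, and one checks these coincide by comparing the net exponent of $(1-x^m)$ for $m$ in each residue class mod $4$. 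As a robustness check I would also verify the first several coefficients directly ($n=0,1,2,3$), matching the known small values of $|\IBr(B_1)|$ and $|\cW(B_1)|$; for general classical-group experts this identity may even be quotable from the literature on $2$-core towers (e.g.\ \cite{AF90}, \cite{Ol93}), in which case the proof is a one-line reference. Either way, once the generating-function identity is in hand, $|\IBr(B_1)|=|\cW(B_1)|$ follows immediately.
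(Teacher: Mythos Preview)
Your approach is correct and essentially the same as the paper's: both compute generating functions for the two labelling sets and reduce the equality to Jacobi's/Gauss's theta identity. The only real difference is that the paper works with $\scT(n)$ (two partitions and one $2$-core, tracked by $t^n$) rather than $\scT'(n)$, which yields the cleaner identity $\sum_{k\ge0}t^{k(k+1)/2}=\prod_{k\ge1}(1+t^k)(1-t^{2k})$ directly, whereas your choice of $\scT'(n)$ forces you to square the theta series and then unwind an extra layer of Euler-type bookkeeping; but the substance is identical.
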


\begin{proof}
It suffices to show that $|\scU(n)|=|\scT(n)|$.
First by \cite[p.38]{Wa63}, $|\scU(n)|$ is the coefficient of $t^n$ in
$$\prod_{k=1}^\infty\frac{(1+t^k)^2}{1-t^k}
  =\prod_{k=1}^\infty \frac{(1+t^k)(1-t^{2k})}{(1-t^k)^2}.$$

On the other hand, $|\scT(n)|$ is the coefficient of $t^n$ in
\begin{align*}
\left( \sum_{k=0}^\infty t^k \right )\left(\sum_{k=0}^\infty t^{2k}\right)
\left(\sum_{k=0}^\infty t^{3k}\right)\cdots\left(\sum_{k=0}^\infty t^k\right)&\left(\sum_{k=0}^\infty t^{2k}\right)
\left(\sum_{k=0}^\infty t^{3k}\right)\cdots 
\left(\sum_{k=0}^\infty t^{\frac{k(k+1)}{2}}\right)\\
&=\left(\prod_{k=1}^\infty\frac{1}{ (1-t^k)^2}\right)\left(\sum_{k=0}^\infty t^{\frac{k(k+1)}{2}}\right)
\end{align*}
In order to prove $|\scU(n)|=|\scT(n)|$, it suffices to show that
$$\sum_{k=0}^\infty t^{\frac{k(k+1)}{2}}=\prod_{k=1}^\infty (1+t^k)(1-t^{2k}),$$
and this follows directly by Jacobi's identity (see, e.g.,
\cite[Thm.~354]{HW08}). This completes the proof.
\end{proof}

\begin{thm}   \label{thm:equibij}
 There is an $\Aut(G)$-equivariant bijection between $\IBr(B_1)$ and $\cW(B_1)$.
 \end{thm}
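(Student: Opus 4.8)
The plan is to reduce the construction of an $\Aut(G)$-equivariant bijection to two counting statements, exploiting the fact that almost all of the relevant automorphism action is already trivial. By Corollary~\ref{cor:aut IBR}(a) and Proposition~\ref{act-wei-uni}(a), field automorphisms act trivially on both $\IBr(B_1)$ and $\cW(B_1)$; since inner automorphisms also act trivially on both sets and $\Out(G)=\langle\delta\rangle\times\langle F_p\rangle$ with $\langle\delta\rangle$ of order $2$, it suffices to produce a bijection $\IBr(B_1)\to\cW(B_1)$ that is equivariant for the involution $\delta$. Any such bijection is then automatically $\Aut(G)$-equivariant.

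Both $\IBr(B_1)$ and $\cW(B_1)$ are thus finite sets carrying an action of the group $\langle\delta\rangle$ of order $2$, so each decomposes into its $\delta$-fixed part and a free part. Two such $C_2$-sets are isomorphic exactly when their fixed-point subsets have equal cardinality and the total sets have equal cardinality (the free parts are then forced to have the same size, hence the same number of $\delta$-orbits, and one assembles an equivariant bijection by matching fixed points with fixed points and orbit-pairs with orbit-pairs). The equality $|\IBr(B_1)|=|\cW(B_1)|$ is Proposition~\ref{AWC-B0}. By Corollary~\ref{cor:aut IBR}(b) the $\delta$-fixed members of $\IBr(B_1)$ are precisely the Brauer characters whose label lies in $\scU_1(n)$, each counted with its multiplicity $2^{k_{\bm}}$; by Proposition~\ref{act-wei-uni}(b) the $\delta$-fixed classes in $\cW(B_1)$ are precisely those labelled by $\scT'_1(n)$. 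So the theorem follows once we establish
$$\sum_{\bm\in\scU_1(n)}2^{k_{\bm}}=|\scT'_1(n)|.$$

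The main work is this identity, which I would prove by comparing generating functions. Writing $\bm\in\scU_1(n)$ as $\bm=2\bm'$ with $\bm'$ an arbitrary partition of $n$ and $k_{\bm}$ the number of distinct even parts of $\bm'$, the left-hand side has generating function $\prod_{j\ \mathrm{odd}}(1-t^j)^{-1}\prod_{j\ \mathrm{even}}\tfrac{1+t^j}{1-t^j}$, which on separating odd and even indices collapses to $\prod_{i\ge1}(1-t^i)^{-1}\cdot\prod_{k\ge1}(1+t^{2k})$. On the other side, an element of $\scT'_1(n)$ amounts to a triple $(\la_1,\la_2,\ka_1)$ consisting of two partitions and a $2$-core with $|\la_1|+4|\la_2|+2|\ka_1|=n$, so $|\scT'_1(n)|$ has generating function $\prod_{i\ge1}(1-t^i)^{-1}\cdot\prod_{i\ge1}(1-t^{4i})^{-1}\cdot\sum_{k\ge0}t^{k(k+1)}$; applying Jacobi's identity in the form used in the proof of Proposition~\ref{AWC-B0}, with $t$ replaced by $t^2$, rewrites $\sum_{k\ge0}t^{k(k+1)}=\prod_{k\ge1}(1+t^{2k})(1-t^{4k})$ and collapses this to the same expression $\prod_{i\ge1}(1-t^i)^{-1}\cdot\prod_{k\ge1}(1+t^{2k})$. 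Hence the two counts coincide, and the $C_2$-set criterion yields the desired bijection. The only genuine obstacle is getting the combinatorics of $\scU_1(n)$ versus $\scT'_1(n)$ exactly right --- in particular keeping track of the $2^{k_{\bm}}$ multiplicities on the Brauer side and of the $2$-core/$2$-quotient translation between $\scT(n)$ and $\scT'(n)$ --- but once the two generating functions are in hand their agreement is immediate from Jacobi's identity.
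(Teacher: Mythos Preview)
Your proposal is correct and follows essentially the same route as the paper's own proof: reduce to $\langle\delta\rangle$-equivariance via Corollary~\ref{cor:aut IBR} and Proposition~\ref{act-wei-uni}, invoke Proposition~\ref{AWC-B0} for the total count, and then match the $\delta$-fixed points by proving $|\scU_1(n)|=|\scT'_1(n)|$ through the very same generating-function computation and application of Jacobi's identity (with $t\mapsto t^2$). The only cosmetic difference is that you spell out the multiplicity $2^{k_{\bm}}$ explicitly in the sum, whereas the paper absorbs it into its counting convention for $\scU_1(n)$.
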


\begin{proof}
By Corollary~\ref{cor:aut IBR} and Proposition~\ref{act-wei-uni}, field
automorphisms fix all elements in $\IBr(B_1)\cup\cW(B_1)$. As regards the
action of diagonal automorphisms, we note that an element of $\IBr(B_1)$
(resp.~$\cW(B_1)$) is either $\tG$-invariant or it lies in an orbit of
length~2. Thanks to Proposition \ref{AWC-B0},
it suffices to prove that the $\tG$-fixed elements of $\IBr(B_1)$ and
$\cW(B_1)$ have equal cardinality, and then by Corollary~\ref{cor:aut IBR} and
Proposition~\ref{act-wei-uni} again it suffices to show that $|\scU_1(n)|=|\scT'_1(n)|$.

First, $|\scT'_1(n)|$ is the number of triples $(\mu,\la,\kappa)$, where 
$\mu$ and $\la$ are partitions, and $\kappa$ is a $2$-core such that
$|\mu|+4|\la|+2|\kappa|=n$. Thus $|\scT'_1(n)|$ is the coefficient of $t^n$ in
\begin{align*}
\left(\sum_{k=0}^\infty t^k\right)\left(\sum_{k=0}^\infty t^{2k}\right)
\left(\sum_{k=0}^\infty t^{3k}\right)\cdots\left(\sum_{k=0}^\infty t^{4k}\right)&\left(\sum_{k=0}^\infty t^{8k}\right)
\left(\sum_{k=0}^\infty t^{12k}\right)\cdots 
\left(\sum_{k=0}^\infty t^{k(k+1)}\right)\\
&=\left(\prod_{k=1}^\infty\frac{1}{ 1-t^k}\right)\left(\prod_{k=1}^\infty\frac{1}{ 1-t^{4k}}\right)\left(\sum_{k=0}^\infty t^{k(k+1)}\right).
\end{align*}

On the other hand, $|\scU_1(n)|$ is the number of maps $\bm':\ZZ_{\ge 1}\to \ZZ_{\ge 0}$ such that $\sum_{j\ge 1} j\bm'(j)=n$.
Here $\bm'$ is counted $2^{k_{\bm'}}$-times, where 
$k_{\bm'}=|\{ j\mid j\ \text{is even and}\ \bm'(j)\ne 0 \}|$.
Hence $|\scU_1(n)|$ is the coefficient of $t^n$ in
\begin{align*}
\left(\sum_{k=0}^\infty t^k\right)\left(1+2\sum_{k=1}^\infty t^{2k}\right)
\left(\sum_{k=0}^\infty t^{3k}\right)&\left(1+2\sum_{k=1}^\infty t^{4k}\right)\left(\sum_{k=0}^\infty t^{5k}\right)\cdots\\
&=\left(\frac{1}{1-t}\right)\left(\frac{1+t^2}{1-t^2}\right)\left(\frac{1}{1-t^3}\right)\left(\frac{1+t^4}{1-t^4}\right)\left(\frac{1}{1-t^5}\right)\cdots\\
&=\left(\prod_{k=1}^\infty\frac{1}{ 1-t^k}\right)\left(\prod_{k=1}^\infty (1+t^{2k})\right).
\end{align*}

Thus in order to prove $|\scU_1(n)|=|\scT'_1(n)|$, it suffices to show
that
$$\sum_{k=0}^\infty t^{k(k+1)}=\prod_{k=1}^\infty (1+t^{2k})(1-t^{4k}),$$
and this again follows by Jacobi's identity (cf. \cite[Thm.~354]{HW08}).
This completes the proof.
\end{proof}

\vspace{2ex}

Now we are ready to prove our main result.

\begin{proof}[Proof of Theorem \ref{mainthm}]
Thanks to \cite[Thm.~C]{Sp13}, we only need to consider non-defining
characteristic and thus may assume that $q$ is odd. By
Proposition~\ref{ibaw-condition-C-2}, Remark~ \ref{equiva-condition(ii)(iii)}
and Corollary~\ref{cor:(3)}, the inductive blockwise Alperin weight condition
holds for the simple group $S=\PSp_{2n}(q)$ and the prime $2$ if there is an
$\Aut(G)$-equivariant bijection between $\IBr(G)$ and $\cW(G)$ which preserves
$2$-blocks.   \par
According to Propositions~\ref{Jor-dec-Bar} and~\ref{Jor-dec-weights}, we only
need to consider the unipotent $2$-block of $C_{G^*}(s)^*$. Note that
$C_{G^*}(s)^*$ is a product of a symplectic group with some general linear and
unitary groups and the blockwise bijections between the irreducible $2$-Brauer
characters and conjugacy classes of $2$-weights of general linear and unitary
groups constructed by  An 
are equivariant under the
action of field and graph automorphisms by \cite[Thm.~1.1]{LZ18}. Thus, we
only need to consider the unipotent (principal) $2$-block of the symplectic
group $G$, which is dealt with in Theorem~\ref{thm:equibij}.
\end{proof}



\begin{thebibliography}{99}

\bibitem{Al87}
{\sc J. L. Alperin}, Weights for finite groups. In: \emph{The Arcata Conference
  on Representations of Finite Groups, Arcata, Calif. (1986), Part I}. 
  Proc. Sympos. Pure Math., {vol. \bf 47}, Amer. Math. Soc., Providence, 1987,
  pp.369--379.

\bibitem{AF90}
{\sc J. L. Alperin, P. Fong}, Weights for symmetric and general linear groups.
  \emph{J. Algebra \bf 131} (1990), 2--22.

\bibitem{An92}
{\sc J. An}, 2-weights for general linear groups. \emph{J. Algebra \bf 149}
  (1992), 500--527.

\bibitem{An93}
{\sc J. An}, $2$-weights for classical groups. \emph{J. Reine Angew. Math.
  \bf 439} (1993), 159--204.

\bibitem{An93b}
{\sc J. An}, 2-weights for unitary groups. \emph{Trans. Amer. Math. Soc.
  \bf 339} (1993), 251--278.

\bibitem{An94}
{\sc J. An}, Weights for classical groups. \emph{Trans. Amer. Math. Soc.
  \bf 342} (1994), 1--42.

\bibitem{BR03}
{\sc C. Bonnaf\'e, R. Rouquier}, Cat\'egories d\'eriv\'ees et vari\'et\'es de
   Deligne--Lusztig. \emph{Publ. Math. Inst. Hautes \'Etudes Sci. \bf 97}
  (2003), 1--59.

\bibitem{Br86}
{\sc M. Brou\'e}, Les $l$-blocs des groupes $\GL(n, q)$ et U$(n, q^2)$ et leurs
  structures locales. \emph{Ast\'erisque \bf 133--134} (1986), 159--188.

\bibitem{BS19}
{\sc J. Brough, B. Sp\"ath}, A criterion for the inductive Alperin weight
  condition. In preparation.

\bibitem{CE94}
{\sc M. Cabanes, M. Enguehard}, On unipotent blocks and their ordinary characters. \emph{Invent. Math.\bf  117} (1994), 149--164.

\bibitem{CE04}
{\sc M. Cabanes, M. Enguehard}, \emph{Representation Theory of Finite Reductive
  Groups}. New Math. Monogr., {vol. \bf 1}, Cambridge University Press,
  Cambridge, 2004.

\bibitem{CS17}
{\sc M. Cabanes, B. Sp\"ath}, Inductive McKay condition for finite simple groups
  of type $\mathtt C$. \emph{Represent. Theory \bf 21} (2017), 61--81.

\bibitem{Ca}
{\sc R. Carter}, \emph{Finite Groups of Lie type: Conjugacy Classes and
  Complex Characters}. Wiley, Chichester, 1985.

\bibitem{Ch18}
{\sc R. Chaneb}, Basic sets for unipotent blocks of finite reductive groups in
  bad characteristic. \emph{Int. Math. Res. Not. IMRN} (2020),
  \url{ https://doi.org/10.1093/imrn/rnaa019}.

\bibitem{Feng19}
{\sc Z. Feng}, The blocks and weights of finite special linear and unitary
   groups. \emph{J. Algebra \bf 523} (2019), 53--92.

\bibitem{FLZ19a}
{\sc Z. Feng, Z. Li, J. Zhang}, On the inductive blockwise Alperin weight
  condition for classical groups. \emph{J. Algebra \bf 537} (2019), 381--434.

\bibitem{FS82}
{\sc P. Fong, B. Srinivasan}, The blocks of finite general linear and unitary
  groups. \emph{ Invent. Math. \bf 69} (1982), 109--153.

\bibitem{FS89}
{\sc P. Fong, B. Srinivasan}, The blocks of finite classical groups. \emph{J.
  Reine Angew. Math. \bf 396} (1989), 122--191.

\bibitem{Ge91}
{\sc M. Geck}, On the decomposition numbers of the finite unitary groups in
  non-defining characteristic. \emph{Math. Z. \bf 207} (1991), 83--89.

\bibitem{GH08}
{\sc M. Geck, D. H\'ezard}, On the unipotent support of character sheaves.
  \emph{Osaka J. Math. \bf 45} (2008), 819--831.

\bibitem{GM20}
{\sc M. Geck, G. Malle}, \emph{The Character Theory of Finite Groups of Lie
  Type: A Guided Tour}. Cambridge University Press, Cambridge, 2020.

\bibitem{GLS98}
{\sc D. Gorenstein, R. Lyons, R. Solomon}, \emph{The Classification of the
  Finite Simple Groups, Number 3}. Math. Surveys Monogr., {vol. \bf 40},
  American Mathematical Society, Providence, RI, 1998.

\bibitem{HW08}
{\sc G. H. Hardy, E. M. Wright}, \emph{An Introduction to the Theory of
  Numbers}. 6th ed. Oxford University Press, Oxford, 2008.

\bibitem{Hez}
{\sc D. H\'ezard}, \emph{Sur le support unipotent des faisceaux-carac\`eres}.
  Th\`ese, Universit\'e Claude Bernard -- Lyon I, 2004.
  \url{https://tel.archives-ouvertes.fr/tel-00012071}

\bibitem{Li19}
{\sc C. Li}, An equivariant bijection between irreducible Brauer characters and
  weights for $\Sp(2n,q)$. \emph{J. Algebra \bf 539} (2019), 84--117.

\bibitem{LL19}
{\sc C. Li, Z. Li}, The inductive blockwise Alperin weight condition for
  $\PSp_4(q)$. \emph{Algebra Colloq. \bf 26} (2019), 361--386.

\bibitem{LZ18}
{\sc C. Li, J. Zhang}, The inductive blockwise Alperin weight condition for
  $\mathrm{PSL}_n(q)$ and $\mathrm{PSU}_n(q)$ with cyclic outer automorphism
  groups. \emph{J. Algebra \bf 495} (2018), 130--149.

\bibitem{Ma14}
{\sc G. Malle}, On the inductive Alperin--McKay and Alperin weight conjecture
  for groups with abelian Sylow subgroups. \emph{J. Algebra \bf 397} (2014),
  190--208.

\bibitem{Ma15}
{\sc G. Malle}, Local-global conjectures in the representation theory of finite
  groups. In: \emph{Representation Theory --- Current Trends and Perspectives},
  EMS Ser. Congr. Rep., Eur. Math. Soc., Z\"urich, 2017, pp.519--539.

\bibitem{Na98}
{\sc G. Navarro}, \emph{Characters and Blocks of Finite Groups}. London
  Mathematical Society Lecture Note Series, vol. {\bf 250}. Cambridge
  University Press, Cambridge, 1998.

\bibitem{NT11}
{\sc G. Navarro, P. H. Tiep}, A reduction theorem for the Alperin weight
  conjecture. \emph{Invent. Math. \bf 184} (2011), 529--565.

\bibitem{Ol93}
{\sc J. B. Olsson}, \emph{Combinatorics and Representations of Finite Groups}.
  Vorlesungen aus dem Fachbereich Mathematik der Universit\"at Essen, Heft 20,
  Universit\"at Essen, Essen, 1993.

\bibitem{Sch16}
{\sc E. Schulte}, The inductive blockwise Alperin weight condition for $G_2(q)$
  and $^3D_4(q)$. \emph{J. Algebra \bf 466} (2016), 314--369.

\bibitem{Sp82}
{\sc N. Spaltenstein}, \emph{Classes Unipotentes et Sous-Groupes de Borel}.
  Lecture Notes in Mathematics, 946. Springer-Verlag, Berlin-New York, 1982.

\bibitem{Sp13}
{\sc B. Sp\"ath}, A reduction theorem for the blockwise Alperin weight
  conjecture. \emph{J. Group Theory \bf 16} (2013), 159--220.

\bibitem{Tay12}
{\sc J. Taylor}, \emph{On Unipotent Supports of Reductive Groups With a
   Disconnected Centre}. PhD thesis, University of Aberdeen, 2012.
  \url{https://tel.archives-ouvertes.fr/tel-00709051}

\bibitem{Tay13}
{\sc J. Taylor}, On unipotent supports of reductive groups with a disconnected
  centre. \emph{J. Algebra \bf 391} (2013), 41--61.

\bibitem{Tay18}
{\sc J. Taylor}, Action of automorphisms on irreducible characters of
  symplectic groups. \emph{J. Algebra \bf 505} (2018), 211--246.

\bibitem{Wa63}
{\sc G. E. Wall}, On the conjugacy classes in the unitary, symplectic and
  orthogonal groups. \emph{J. Austral. Math. Soc. \bf 3} (1963), 1--62.

\end{thebibliography}
\end{document}